

\documentclass{aims} 
\usepackage{amsmath}
\usepackage{paralist}
\usepackage[misc]{ifsym}
\usepackage{epsfig} 
\usepackage{epstopdf} 
\usepackage[colorlinks=true]{hyperref}
\hypersetup{urlcolor=blue, citecolor=red}
\allowdisplaybreaks

\textheight=8.2 true in
 \textwidth=5.0 true in
  \topmargin 30pt
   \setcounter{page}{1}




\usepackage{bm}
\usepackage{tikz}

\def\div#1{{\,\nabla \cdot #1}}
\def\d#1{\overline{#1}}
\def\x{{\bf x}}
\def\o{{\bf 1}}
\def\M{{\bf M}}
\def\A{{\bf A}}
\def\B{{\bf B}}

\def\Q{{\bf Q}}
\def\X{{\bf X}}
\def\Y{{\bf Y}}
\def\Z{{\bf Z}}
\def\b{{\bf b}}
\def\x{{\bf x}}

\def\f{{\bf f}}
\def\m{{\bf m}}
\def\q{{\bf q}}
\def\bphi{{\bm \phi}}
\def\dr{{\rm d}}

\newtheorem{theorem}{Theorem}[section]

\newtheorem{lemma}[theorem]{Lemma}
\newtheorem{proposition}[theorem]{Proposition}

\theoremstyle{definition}

\newtheorem{example}{Example}


\title[Measure preservation for LV tree-systems]
{Measure preservation and integrals for Lotka--Volterra tree-systems
and their Kahan discretisation} 

\author[P.H. van der Kamp$^*$, R.I. McLachlan, D.I. McLaren and G.R.W. Quispel]{}

\subjclass{Primary: 34A05, 39B99.}
\keywords{Measure preservation, integrals, Lotka--Volterra, tree-systems, Kahan discretisation}


\thanks{$^*$Corresponding author: Peter H. van der Kamp}


\begin{document}
\maketitle

\centerline{\scshape
Peter H. van der Kamp$^{{\href{mailto:P.VanDerKamp@latrobe.edu.au}{\textrm{\Letter}}}*1}$,
Robert I. McLachlan$^{{\href{mailto:R.McLachlan@massey.ac.nz}{\textrm{\Letter}}}*2}$,
David I. McLaren$^{{\href{mailto:D.McLaren@latrobe.edu.au}{\textrm{\Letter}}}*1}$,
and G. R. W. Quispel$^{{\href{mailto:R.Quispel@latrobe.edu.au}{\textrm{\Letter}}}1}$}

\medskip

{\footnotesize
 \centerline{$^1$La Trobe University, Australia}
} 

\medskip

{\footnotesize
 \centerline{$^2$Massey University, New Zealand}
}

\bigskip

 \centerline{(Communicated by Handling Editor)}


\begin{abstract}
We show that any Lotka--Volterra tree-system associated with an $n$-vertex tree, as introduced in Quispel et al., J. Phys. A 56 (2023) 315201, preserves a rational measure. We also prove that the Kahan discretisation of these tree-systems factorises and preserves the same measure. As a consequence, for the Kahan maps of Lotka--Volterra systems related to the subclass of tree-systems corresponding to graphs with more than one $n$-vertex subtree, we are able to construct rational integrals.
\end{abstract}


\section{Introduction}
An (autonomous) $n$-dimensional Lotka--Volterra (LV) system is a system of the form
\begin{equation}\label{eq:lv}
\dot{x_i}=x_i(b_i+\sum_{j=1}^n A_{i,j}x_j), \qquad i=1,\ldots,n,
\end{equation}
where the vector $\b$ and the matrix $\A$ do not depend on $\x(t)$ or $t$.

A polynomial $P(\x)$ is called a Darboux polynomial (DP) \cite{ce2,ga,go} for an ODE
\begin{equation} \label{ODE}
\dot{\x}=\f(\x)
\end{equation}
if there is a function $C(\x)$, called the cofactor of $P(\x)$, such that
\begin{equation} \label{ddp}
\dot{P}(\x)=C(\x)P(\x).
\end{equation}
More generally, if \eqref{ddp} holds but $P(\x)$ is not a polynomial, it is called a Darboux function. Each $n$-dimensional LV-system has $n$ DPs, namely the coordinates, $x_i$, themselves. Moreover, LV-systems are normal forms for quadratic ODEs with $n$ linearly independent linear DPs; by a linear transformation (introducing the DPs as new variables) such a system can be written in LV form.

Some LV-systems have additional Darboux polynomials. We recall a key lemma.
\begin{lemma}[\cite{KQM}]
The LV-system \eqref{eq:lv} has Darboux polynomial 
$
P_{i,k} := \alpha x_i + \beta x_k$ with $\alpha\beta\ne 0$ if and only if, for some constant $b$ and
for all $j\not\in\{i,k\}$,
\begin{align}
A_{i,j} &= A_{k,j} \label{eq:Aij}\\
b_i &= b_k = b \nonumber \\
\alpha(A_{k,k}-A_{i,k}) & = \beta(A_{k,i}-A_{i,i}) \nonumber
\end{align}
and $(A_{k,k}-A_{i,k})(A_{k,i}-A_{i,i})\ne 0$.
\end{lemma}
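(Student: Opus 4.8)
The plan is to prove both implications by the standard device of writing the cofactor explicitly and comparing polynomial coefficients. Since each $\dot{x}_i$ is quadratic in $\x$, the derivative $\dot P_{i,k}=\alpha\dot x_i+\beta\dot x_k$ is a polynomial of degree at most two, so if the degree-one polynomial $P_{i,k}$ divides it, then by \eqref{ddp} the cofactor $C(\x)$ must be affine, $C(\x)=c_0+\sum_{j=1}^n c_j x_j$. Thus the entire content of \eqref{ddp} for $P=P_{i,k}$ is the polynomial identity
\[
\alpha x_i\Bigl(b_i+\sum_j A_{i,j}x_j\Bigr)+\beta x_k\Bigl(b_k+\sum_j A_{k,j}x_j\Bigr)=\Bigl(c_0+\sum_j c_j x_j\Bigr)(\alpha x_i+\beta x_k),
\]
and the proof reduces to reading off when this can be solved for $c_0,\dots,c_n$.

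First I would match the degree-one terms: the monomial $x_i$ forces $c_0=b_i$ and $x_k$ forces $c_0=b_k$ (there are no other linear monomials on either side), whence $b_i=b_k=:b$ and $c_0=b$. Next I would match the degree-two terms. For each $j\notin\{i,k\}$ the monomial $x_i x_j$ gives $\alpha A_{i,j}=\alpha c_j$ and $x_k x_j$ gives $\beta A_{k,j}=\beta c_j$; since $\alpha\beta\ne0$ this is exactly $A_{i,j}=A_{k,j}=c_j$, i.e. \eqref{eq:Aij}. The monomials $x_i^2$ and $x_k^2$ give $c_i=A_{i,i}$ and $c_k=A_{k,k}$, and the single mixed monomial $x_i x_k$ gives $\alpha A_{i,k}+\beta A_{k,i}=\alpha c_k+\beta c_i$; substituting the values of $c_i,c_k$ just found and rearranging turns this into $\alpha(A_{k,k}-A_{i,k})=\beta(A_{k,i}-A_{i,i})$, the third displayed condition. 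The converse is simply this computation run backwards: given the constant $b$ and the three conditions, set $c_0=b$, $c_j=A_{i,j}=A_{k,j}$ for $j\notin\{i,k\}$, $c_i=A_{i,i}$, $c_k=A_{k,k}$, and check that every monomial coefficient of the identity above is satisfied — the mixed term precisely because that is what the third condition encodes — so $C(\x)=b+\sum_j c_j x_j$ is a cofactor and $P_{i,k}$ is a Darboux polynomial.

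The one point needing a little care, and the closest thing to an obstacle, is the non-degeneracy clause $(A_{k,k}-A_{i,k})(A_{k,i}-A_{i,i})\ne0$. By the third condition together with $\alpha\beta\ne0$, the two factors $A_{k,k}-A_{i,k}$ and $A_{k,i}-A_{i,i}$ vanish together, so the clause just says that they are nonzero. If instead both vanish, then — with \eqref{eq:Aij} and $b_i=b_k$ — rows $i$ and $k$ of $\A$ coincide and $b_i=b_k$, so $\dot x_i/x_i=\dot x_k/x_k$ identically and $\alpha x_i+\beta x_k$ is a Darboux polynomial for \emph{every} pair $(\alpha,\beta)$, with a cofactor not depending on $(\alpha,\beta)$. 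The substance of the lemma is therefore the genuinely constrained case, in which the ratio $\alpha:\beta$ is pinned down by the system via $\alpha(A_{k,k}-A_{i,k})=\beta(A_{k,i}-A_{i,i})$; this is exactly the case the non-degeneracy clause isolates, and the one relevant to the tree-systems below. I would state this dichotomy at the outset of the proof so that the claimed equivalence reads correctly.
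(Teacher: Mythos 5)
Your proof is correct, and since the paper itself does not prove this lemma (it is quoted from \cite{KQM}), your coefficient-matching argument is exactly the expected route: reading off $c_0=b_i=b_k$ from the linear terms, $c_j=A_{i,j}=A_{k,j}$ from the mixed terms with $j\notin\{i,k\}$, $c_i=A_{i,i}$ and $c_k=A_{k,k}$ from the squares, and the third condition from the $x_ix_k$ coefficient, which also reproduces the cofactor formula \eqref{Bj} used later in the paper. Your handling of the non-degeneracy clause is likewise apt: when $A_{k,k}-A_{i,k}$ and $A_{k,i}-A_{i,i}$ both vanish, rows $i$ and $k$ of $\A$ coincide and every combination $\alpha x_i+\beta x_k$ is a Darboux polynomial, so the clause isolates precisely the case in which the ratio $\alpha:\beta$ is pinned down by the system, which is the case relevant to the tree-systems that follow.
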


We will take such a DP to be
\begin{equation*} \label{eq:Pik}
P_{i,k} = (A_{k,i}-A_{i,i})x_i + (A_{k,k}-A_{i,k}) x_k.
\end{equation*}
Its cofactor is given by $\sum_j B_{j} x_j$, where
\begin{equation} \label{Bj}
B_j=\begin{cases} A_{j,j} & \text{if } j\in\{i,k\}\\
A_{i,j}=A_{k,j} & \text{if } j\not\in\{i,k\},\end{cases}
\end{equation}
cf. \cite[Definition 13]{KQM}.

If \eqref{eq:Aij} holds for several pairs $(i,k)$, the associated LV-system has several additional DPs. In this paper, we consider LV-systems (\ref{eq:lv}) with $\b={\bf 0}$\footnote{For a discussion on the relative importance of homogeneous vs inhomogeneous polynomial systems cf. e.g. \cite{Collins}.} and we view the pairs $(i,k)$ such that \eqref{eq:Aij} holds as edges of a graph with $n$ vertices. We shall see that the structure of the graph determines properties of the associated LV-system and of a certain birational map associated with it.

The case that the graph is a tree on $n$ vertices was considered in \cite{QTMK,KQM}, where 
$(3n-2)$-parameter families of homogeneous $n$-dimensional LV-systems, in one-to-one correspondence with trees on $n$ vertices, were shown to be superintegrable. These families will be referred to as tree-systems, or $T$-systems if it is clear that $T$ is a tree.\footnote{Tree-systems are not to be confused with $T$-systems, where $T$ stands for transfer matrix (or Toda or Tau) \cite[footnote 4]{KNS}.} To each edge of the tree $T$ corresponds a DP for the LV-system, and, by using the $n$ given DPs, $x_i$, one can then construct $n-1$ integrals. We illustrate the construction with an example, the bushy tree on 4 vertices, in section 2.

An ODE \eqref{ODE} is measure-preserving with measure
\[
\frac{\dr x_1\dr x_2\cdots\dr x_n}{d(\x)}
\]
if $d(\x)$, the density, is a DP with cofactor equal to the divergence of $\f(\x)$, i.e., it satisfies
\begin{equation*} 
\dot{d}(\x)=\left(\div \f(\x)\right)\ d(\x) .
\end{equation*}
We show, in section 2, that Lotka--Volterra $T$-systems are measure-preserving, with reciprocal density
\begin{equation} \label{dd}
d(\x) = \prod_{i=1}^n x_i^{2-m_i} \prod_{j=1}^{n-1} P_j,
\end{equation}
where, cf. \cite[Equation (6)]{KQM},
\begin{equation} \label{Pi}
\begin{split}
P_j&:=P_{u_j,v_j}\\
&=(A_{v_i,u_i}-A_{u_i,u_i})x_{u_i} + (A_{v_i,v_i}-A_{u_i,v_i}) x_{v_i}\\
&=(c_i-a_{u_i})x_{u_i} + (a_{v_i}-b_i) x_{v_i}
\end{split}
\end{equation}
is the DP obtained from the $j$th edge, $e_j=(u_j,v_j)$, of the tree $T$, and $m_i$ is the number of edges connected to the vertex $i\in T$.

In section 3 we consider the Kahan discretisation of tree-systems. The Kahan discretisation \cite{ce6,ka2,ki,ko,pe7,pe9} with step size $h$ of a homogeneous quadratic ODE
\[
\dot{x}_i=\sum_{j,k} c^i_{j,k}x_jx_k
\]
is the birational map $\x \mapsto \x'$ implicitly given (or defined) as follows by
\[
\frac{x'_i-x_i}{h}=\sum_{j,k} c^i_{j,k}\frac{x'_jx_k+x_jx'_k}{2}.
\]
We show that the Kahan discretisation of a tree-system is explicitly given by
\begin{equation*}
x'_i=x_i\frac{\prod_{j\neq i} K_{i,j}}{|\M|}
\end{equation*}
with
\begin{equation*}
K_{i,j}=1-\frac{h}{2}\left((\A.\x)_j+(A_{j,j}-A_{i,j})x_j\right),
\end{equation*}
and
\begin{equation*} 
\M=\d{\o}-\frac{h}{2}\left(\d{\x}.\A+\d{\A.\d{\x}.\o}\right).
\end{equation*}
where $\d{\x}$ denotes the diagonal matrix with entries $\d{\x}_{ii}=x_i$.

A {\em discrete  Darboux polynomial} for a rational map $\x\mapsto \x' :=\bphi(\x)$, $\x\in\mathbb{R}^n$, is a polynomial $P\colon\mathbb{R}^n\to \mathbb{R}$ such that there exists a rational function $C\colon\mathbb{R}^n\to\mathbb{R}$ (again called the cofactor of $P$) whose denominator does not have any common factors with $P$, such that $P' = C P$ where $P':=P\circ\bphi$. 
If $P_1,\dots,P_k$ are Darboux polynomials with cofactors $C_1,\dots,C_k$, respectively, then $P := \prod P_i^{\alpha_i}$ obeys
$P'=CP$ where $C=\prod C_i^{\alpha_i}$. (If the $\alpha_i$ are not nonnegative integers, $P$ is  a Darboux function rather than a Darboux polynomial.) If, in addition, $C=1/\det D\bphi$, then $\frac{1}{P}\dr x_1\dots \dr x_n$ is an invariant measure of $\bphi$, while if $C=1$ then $P$ is a first integral of $\bphi$ \cite{ce1,MMQ}.

Linear Darboux polynomials are preserved under Kahan discretisation \cite[Theorem 1]{ce2}. We show that the cofactors of the preserved discrete Darboux polynomials, $P_i$, of the Kahan-discretised tree-systems are given by
\[
L_i\frac{\prod_{j\neq u_i,v_i} K_{u,j}}{|\M|}
\]
where
\[
L_{i}=1-\frac{h}{2}\left((\A.\x)_{u_i}-(A_{u_i,u_i}-A_{v_i,u_i})x_{u_i}\right).
\]
We prove that the Jacobian of the Kahan map of a tree-system is given by
\begin{equation} \label{jj}
J=\frac{\left(\prod_{i=1}^{n-1} L_i\right)\left(\prod_{i=1}^n \left(\prod_{j=1}^n K_{i,j}\right)/K_{i,i} \right)}{|\M|^{n+1}},
\end{equation}
and we prove that the expression $d(\x)$, given by \eqref{dd}, is a rational Darboux function with cofactor $J$, given by \eqref{jj}.
Thus, Kahan-discretised tree-systems are measure-preserving with density $(d(\x))^{-1}$.

In the final section, we consider $n$-dimensional LV-systems related to graphs $G$ that contain more than one subgraph which is a tree on $n$ vertices. The Kahan maps related to these so-called  $G$-systems preserve more than one measure and this enables us to find integrals for these maps. We classify distinct classes of $G$-systems and explicitly provide all distinct graphs on $4,5$ and $6$ vertices. We show that if $G$ contains a cycle of length $\ell$, the Kahan map of the $G$-system has at least $\ell-2$ integrals.

\section{Measure preservation for tree-systems}
We start this section by illustrating the construction of tree-systems with an example, whilst referring to where such systems were introduced. We then prove that all tree-systems are measure-preserving and provide an explicit expression for a density of the measure. This particular density will be preserved under Kahan-discretisation, which is the topic of the next section.

For any tree $T$ on $n$ vertices, one can associate a homogeneous Lotka--Volterra system, i.e. a system of the form
\begin{equation} \label{Tsy}
\dot x_i =  x_i \sum_{j=1}^n A_{i,j} x_j,\qquad i=1,\ldots,n
\end{equation}
with $3n-2$ free parameters \cite{QTMK,KQM}. The $n\times n$ matrix $\A$ is the adjacency matrix of the associated weighted complete digraph of $T$, cf. \cite[Definition 3]{KQM}.
\begin{figure}[h]
\begin{center}
\scalebox{.75}{
\begin{tikzpicture}
\node[shape=circle,draw=black,line width=0.5mm] (1) at (2,-1/2) {1};
	\node[shape=circle,draw=black,line width=0.5mm] (2) at (2,2) {2};
	\node[shape=circle,draw=black,line width=0.5mm] (3) at (0,4) {3};
	\node[shape=circle,draw=black,line width=0.5mm] (4) at (4,4) {4};
	\path [-,line width=0.5mm, ] (1) edge node[left] {$1$} (2);
	\path [-,line width=0.5mm, ] (2) edge node[below left] {$2$} (3);
	\path [-,line width=0.5mm, ] (2) edge node[below right] {$3$} (4);
\end{tikzpicture}
}
\caption{\label{bt4} The bushy tree on 4 vertices.}
\end{center}
\end{figure}
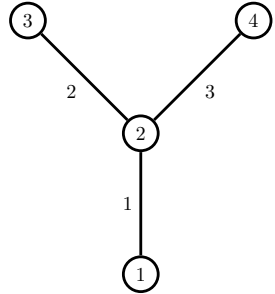

For the tree shown in Figure \ref{bt4} the matrix $\A$, with $3\times 4-2=10$ free parameters, is
\begin{equation} \label{Abt4}
\A=\begin{pmatrix}
a_{{1}}&b_{{1}}&b_{{2}}&b_{{3}}\\
c_{{1}}&a_{{2}}&b_{{2}}&b_{{3}}\\
c_{{1}}&c_{{2}}&a_{{3}}&b_{{3}}\\
c_{{1}}&c_{{3}}&b_{{2}}&a_{{4}}
\end{pmatrix}.
\end{equation}
We note that the cofactor of the DP $x_i$ is given by $(\A\x)_i$. The matrix $\A$ has the property that for each pair of rows $e_i=(u_i,v_i)\in\{(1,2),(2,3),(2,4)\}$ (that is, for each edge of $T$) we have $A_{u_i,k}=A_{v_i,k}$ for all $k\not\in e_i$. This property gives rise to $n-1$ additional Darboux polynomials of the form \eqref{Pi},
\begin{equation} \label{P123}
\begin{split}
P_1&=\left( c_{{1}}-a_{{1}} \right) x_{{1}}+ \left( a_{{2}}-b_{{1}} \right) x_{{2}}\\
P_2&=\left( c_{{2}}-a_{{2}} \right) x_{{2}}+ \left( a_{{3}}-b_{{2}} \right) x_{{3}}\\
P_3&=\left( c_{{3}}-a_{{2}} \right) x_{{2}}+ \left( a_{{4}}-b_{{3}} \right) x_{{4}}.
\end{split}
\end{equation}
The cofactors of $P_l$ is given by $(\B\x)_l$ where
\[
\B=\begin{pmatrix}
a_{{1}}&a_{{2}}&b_{{2}}&b_{{3}} \\
c_{{1}}&a_{{2}}&a_{{3}}&b_{{3}} \\
c_{{1}}&a_{{2}}&b_{{2}}&a_{{4}}
\end{pmatrix}.
\]
If the $l$th edge is $e_l=(i,k)$ then the element of $\B$ in row $l$ and column $j$ is given by \eqref{Bj}, cf. \cite[Definition 13]{KQM}. 
Using the rather general method \cite[section 2]{QTMK}, the additional DPs
give rise to $n-1$ integrals \cite[Equation (7)]{KQM}, for $i=1,2,3$:
\begin{equation*} 
I_i=P_i^{|\A|}\prod_{k=1}^n x_k^{Z_{i,k}}, \qquad \Z=-\B\A^{-1}|\A|,
\end{equation*}
cf. \cite[section 4.3.2]{QTMK} for explicit expressions.

\begin{proposition} \label{meas}
Let $T$ be a tree on $n$ vertices, and let $m_i$ be the degree of (number of edges which meet at) vertex $i$. The Lotka--Volterra $T$-system \eqref{Tsy} is measure preserving with density
\begin{equation} \label{d}
d = \prod_{i=1}^n x_i^{2-m_i} \prod_{j=1}^{n-1} P_j,
\end{equation}
where $P_j$ is the DP associated with edge $e_j$, given by \eqref{Pi}.
\end{proposition}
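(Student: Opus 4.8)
The plan is to use the criterion recalled in the introduction: the $T$-system \eqref{Tsy} is measure-preserving with density $d$ if and only if $d$ is a Darboux function whose cofactor equals $\div{\f}$, the divergence of the vector field $\f$ of \eqref{Tsy}. Accordingly the proof has three parts: compute $\div{\f}$, compute the cofactor of $d$, and check that the two coincide.

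First, differentiating $\dot x_i = x_i(\A\x)_i$ with respect to $x_i$ gives $(\A\x)_i + A_{i,i}x_i$, so
\[
\div{\f}=\sum_{i=1}^n(\A\x)_i+\sum_{i=1}^n A_{i,i}x_i=\sum_{k=1}^n\bigl(S_k+A_{k,k}\bigr)x_k,\qquad S_k:=\sum_{i=1}^n A_{i,k}.
\]
Second, $x_i$ is a DP with cofactor $(\A\x)_i$ and, by the Lemma, each $P_j$ is a DP with cofactor $(\B\x)_j$, where $\B$ has entries \eqref{Bj}; since cofactors add under products, $d=\prod_{i=1}^n x_i^{2-m_i}\prod_{j=1}^{n-1}P_j$ is a Darboux function with cofactor
\[
C_d=\sum_{i=1}^n(2-m_i)(\A\x)_i+\sum_{j=1}^{n-1}(\B\x)_j.
\]
Equating the coefficients of $x_k$ in $C_d$ and in $\div{\f}$, the proposition reduces to the scalar identity, for each vertex $k$,
\begin{equation}\label{keyid}
\sum_{i=1}^n(1-m_i)A_{i,k}+\sum_{j=1}^{n-1}B_{j,k}=A_{k,k}.
\end{equation}

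The verification of \eqref{keyid} is where I expect the real work to be. The defining property of $\A$ --- that $A_{u,k}=A_{v,k}$ whenever $(u,v)$ is an edge with $k\notin\{u,v\}$ --- shows that $i\mapsto A_{i,k}$ is constant along every path in $T$ avoiding $k$, hence constant on each connected component of $T-k$. Deleting $k$ splits $T$ into exactly $m_k$ subtrees $T_1,\dots,T_{m_k}$ with $|T_r|=n_r$ and $\sum_r n_r=n-1$; let $\alpha_r$ be the common value of $A_{i,k}$ for $i\in T_r$. The sum of the $T$-degrees of the vertices of $T_r$ is $2(n_r-1)$ (the handshaking lemma for the tree $T_r$) plus $1$ (the single edge joining $T_r$ to $k$), so $\sum_{i\in T_r}(1-m_i)=n_r-(2n_r-1)=1-n_r$. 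Now split the three sums in \eqref{keyid} along this partition: the term $i=k$ contributes $(1-m_k)A_{k,k}$; the terms $i\in T_r$ contribute $\alpha_r(1-n_r)$; among the $n-1$ edges, the $m_k$ incident to $k$ have $B_{j,k}=A_{k,k}$ and the $n_r-1$ lying inside $T_r$ have $B_{j,k}=\alpha_r$. The coefficient of $A_{k,k}$ on the left of \eqref{keyid} is thus $(1-m_k)+m_k=1$, and for each $r$ the coefficient of $\alpha_r$ is $(1-n_r)+(n_r-1)=0$; hence the left side equals $A_{k,k}$. This proves \eqref{keyid}, so $C_d=\div{\f}$ and the proposition follows.
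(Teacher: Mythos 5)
Your proof is correct and follows essentially the same route as the paper: reduce measure preservation to the per-vertex coefficient identity $\sum_i(1-m_i)A_{i,k}+\sum_j B_{j,k}=A_{k,k}$, then verify it by decomposing $T$ at the vertex $k$ into its $m_k$ branches, using that $i\mapsto A_{i,k}$ is constant on each branch and counting degrees via the handshaking lemma. The only differences are cosmetic (you work with the components of $T-k$ and a single constant $\alpha_r$ per component, where the paper keeps the branch trees $T^p_l$ and the oriented weights $b_j$, $c_k$, citing the structural facts from the reference rather than rederiving them).
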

\begin{proof}
A product of DPs $d=\sum_i p_i^{q_i}$ is a Darboux function, as $\dot{d}=(\sum_i q_iC_i) d$, where $C_i$ is the cofactor of $p_i$. Let
\[
p_i=\begin{cases} x_i &i=1,\ldots,n\\ P_{i-n} &i=n+1,\ldots,2n-1,\end{cases} \quad
q_i=\begin{cases} 2-m_i &i=1,\ldots,n\\ 1 &i=n+1,\ldots,2n-1.\end{cases}
\]
Then the cofactors are
\[
C_i=\begin{cases} \sum_{j=1}^n A_{i,j}x_j & i=1,\ldots,n \\ \sum_{j=1}^n B_{i-n,j}x_j & i=n+1,\ldots,2n-1, \end{cases}
\]
where the matrix $\B$ contains the coefficients of the cofactors of the additional DPs $P_j$, see \cite[Definition 13]{KQM}.
We have $\dot{x}_i=f_i=x_iC_i$ and $\div{\f}=\Big(\sum_{i=1}^n C_i+A_{i,i}x_i\Big)$. Hence
\begin{align}
\dot{d}/d-\div{\f}&=\sum_{i=1}^{2n-1} q_i C_i - \Big(\sum_{i=1}^n C_{i}+A_{i,i}x_i\Big)\notag\\
&=\sum_{i=1}^{n} (1-m_i)C_i + \sum_{i=1}^{n-1} C_{n+i} - \sum_{i=1}^n a_{i}x_i. \label{etr}
\end{align}
We will show that the coefficient of $x_p$ in the linear combination (\ref{etr}) vanishes for arbitrary $p\in\{1,\ldots,n\}$, i.e., that
\begin{equation} \label{lll}
\sum_{i=1}^{n} (1-m_i)A_{i,p} + \sum_{i=1}^{n-1} B_{i,p} - a_p=0.
\end{equation}
Recall that the edges of $T$ are given by $e_i=(u_i,v_i)$ for $i=1,\ldots,n-1$. For $p\in\{1,\ldots,n\}$, let $J^p,K^p$ be sets of indices such that
\[
j\in J^p \Leftrightarrow v_j=p,\qquad k\in K^p \Leftrightarrow u_k=p.
\]
Then the disjoint union $I^p=J^p\cup K^p$ has $m_p$ elements. We think of the tree $T$ as a collection of $m_p$ trees, $T^p_i$ ($i\in I^p$), connected at the vertex $p$. We define $z(p,i)$ to be the number of edges contained in $T^p_i$, so that, for each $p$, $\sum_{i\in I^p} z(p,i)=n-1$, which equals the number of edges in $T$.

For any tree $T$ on $n$ vertices, if $m_i$ is the number of edges at vertex $i$ and $e=n-1$ is the number of edges, then
\[
\sum_i m_i= 2e \implies \sum_i (1-m_i)= n - 2 e = 1 - e.
\]
Now, consider the first sum in \eqref{lll}. We break up the sum into $m_p$ sums plus a term.
\begin{align}
\sum_{i=1}^{n} (1-m_i)A_{i,p} &= \Big( \sum_{l=1}^{m_p} \sum_{i\in T^p_l} (1-m_i) A_{i,p} \Big) + (1-m_p)A_{p,p} \notag \\
&= \Big( \sum_{j\in J^p} (1-z(p,j))b_j \Big) + \Big( \sum_{k\in K^p} (1-z(p,k))c_k \Big) + (1-m_p) a_{p}, \label{AA}
\end{align}
as for each vertex $i\neq p \in T^p_j, j\in J^p \implies A_{i,p}=b_j$ and for each vertex $i\neq p \in T^p_k, k\in K^p \implies A_{i,p}=c_k$ (note $p\in T^p_i$ does not contribute to the sum as in $T^p_i$ only 1 edge meets in $p$). Next, consider the second sum in \eqref{lll}. Note that here the index $i$ runs over the edges, not the vertices, of $T$. We have
\begin{align}
\sum_{i=1}^{n-1} B_{i,p} &= \Big( \sum_{l=1}^{m_p} \sum_{e_i\in T^p_l} B_{i,p} \Big) \notag \\
&=\Big( \sum_{j\in J^p} (z(p,j)-1)b_j \Big) + \Big( \sum_{k\in K^p} (z(p,k)-1)c_k \Big) + m_p a_{p}, \label{BB}
\end{align}
as $i\in I^p \implies B_{i,p}=a_p$ and $i\not\in I^p, e_i=(v,w), B_{i,p}=A_{v,p}=A_{w,p}=b_j$ or $c_k$, depending on whether there is a $q\in T^p_l$ such that $(q,p)=e_j$ or $(p,q)=e_k$. By substitution of \eqref{AA} and \eqref{BB} into \eqref{lll} the result follows.
\end{proof}
We note that, due to the existence of many integrals, the ODE preserves many other measures. The measure introduced in Proposition \ref{meas} is the one that is preserved by Kahan discretisation.

\begin{example}
The 4-dimensional tree-system given by equation \eqref{Tsy} with \eqref{Abt4}, which is connected to the bushy tree displayed in Figure \ref{bt4}, has divergence
\[
\div{\f}=\left( 2\,a_{{1}}+3\,c_{{1}} \right) x_{{1}}+ \left( 2\,a_{{2}}+b_{{1
}}+c_{{2}}+c_{{3}} \right) x_{{2}}+ \left( 2\,a_{{3}}+3\,b_{{2}}
 \right) x_{{3}}+ \left( 2\,a_{{4}}+3\,b_{{3}} \right) x_{{4}}.
\]
The vector whose $i$-th component equals the degree of vertex $i$ is ${\bf m}=(1,3,1,1)$, so that ${\bf 2}-{\bf m}=(1,-1,1,1)$. According to Proposition \ref{meas} the density
\[
d=x_1(x_2)^{-1}x_3x_4P_1P_2P_3,
\]
with $P_1,P_2,P_3$ given by \eqref{P123}, is a rational Darboux function with cofactor $\div{\f}$. This can be verified by differentiation, or, alternatively, as follows. We write equation \eqref{lll} as ${\bf K}\x=0$, where
\[
{\bf K}=(\o_n-\m)\cdot \A+\o_{n-1}\cdot \B - {\bf a}.
\]
Then, in our case, we only have to compute
\begin{align*}
{\bf K}&=\begin{pmatrix}
0 & -2 & 0 & 0
\end{pmatrix}
\begin{pmatrix}
a_{{1}}&b_{{1}}&b_{{2}}&b_{{3}}\\
c_{{1}}&a_{{2}}&b_{{2}}&b_{{3}}\\
c_{{1}}&c_{{2}}&a_{{3}}&b_{{3}}\\
c_{{1}}&c_{{3}}&b_{{2}}&a_{{4}}
\end{pmatrix}
+
\begin{pmatrix}
1 & 1 & 1
\end{pmatrix}
\begin{pmatrix}
a_{{1}}&a_{{2}}&b_{{2}}&b_{{3}}\\
c_{{1}}&a_{{2}}&a_{{3}}&b_{{3}}\\
c_{{1}}&a_{{2}}&b_{{2}}&a_{{4}}
\end{pmatrix}\\
&\ \ \ 
-
\begin{pmatrix}
a_1 & a_2 & a_3 & a_4
\end{pmatrix}\\
&=-2\begin{pmatrix}
c_1&a_2&b_2&b_3
\end{pmatrix}
+
\begin{pmatrix}
a_1+2c_1&3a_2&a_3+2b_2&a_4+2b_3
\end{pmatrix}\\
&\ \ \ -
\begin{pmatrix}
a_1 & a_2 & a_3 & a_4
\end{pmatrix}\\
&={\bf 0}.
\end{align*}
\end{example}

\section{Measure preservation for the Kahan map of tree-systems}
The components of Kahan discretisation of tree-systems factorise into linear functions. These functions also appear in the cofactors of the DPs \eqref{Pi}. We establish an explicit expression for the Jacobian determinant of the Kahan map and show that the reciprocal density \eqref{d} is a rational Darboux function of the Kahan map which has the Jacobian determinant as its cofactor.

Let $\d{\x}$ be the diagonal matrix with entries $\d{\x}_{ii}=x_i$. The Kahan discretisation of \eqref{Tsy}, $\x\mapsto\x^\prime$,  satisfies $\M\x^\prime=\x$, where
\begin{equation*} 
\M=\d{\o}-\frac{h}{2}\left(\d{\x}.\A+\d{\A.\d{\x}.\o}\right).
\end{equation*}
\begin{proposition} \label{expl}
The Kahan discretisation is explicitly given by
\begin{equation} \label{mp}
x_i^\prime=x_i\frac{\prod_{j\neq i} K_{i,j}}{|\M|}
\end{equation}
where
\begin{equation}\label{Kij}
K_{i,j}=1-\frac{h}{2}\left((\A.\x)_j+(A_{j,j}-A_{i,j})x_j\right).
\end{equation}
\end{proposition}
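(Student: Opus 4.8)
The plan is to evaluate the Kahan map via Cramer's rule applied to the linear system $\M\x'=\x$, and to reduce the resulting determinant to a triangular form using the tree structure of $\A$. Since $\M=\d{\o}+O(h)$, the polynomial $|\M|$ has constant term $1$, hence $|\M|\not\equiv 0$, and $x_i'=\det\M^{(i)}/|\M|$ as rational functions, where $\M^{(i)}$ denotes $\M$ with its $i$th column replaced by $\x$. It therefore suffices to prove the identity
\[
\det\M^{(i)}=x_i\prod_{j\neq i}K_{i,j}.
\]
Fix $i$. Column $i$ of $\M^{(i)}$ is $\x$, and the off-diagonal entries of row $i$ of $\M$ are $\M_{i,k}=-\frac h2 x_iA_{i,k}$; hence every entry of row $i$ of $\M^{(i)}$ carries a factor $x_i$, and factoring it out gives $\det\M^{(i)}=x_i\det\widetilde\M$, where row $i$ of $\widetilde\M$ equals $e_i^\top-\frac h2\sum_{k\neq i}A_{i,k}e_k^\top$ and the remaining rows are those of $\M$.

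Next I would clear the $i$th column below the diagonal: for each $j\neq i$ the $(j,i)$ entry of $\widetilde\M$ is $x_j$, so subtracting $x_j$ times row $i$ from row $j$ zeroes that column except at the $(i,i)$ entry, which is $1$; the determinant is unchanged. Expanding along the $i$th column then gives $\det\widetilde\M=\det B$, where $B$ is the $(n-1)\times(n-1)$ matrix indexed by $\{1,\dots,n\}\setminus\{i\}$ whose entries a short computation identifies as
\[
B_{j,j}=\M_{j,j}+\frac h2 A_{i,j}x_j=K_{i,j},\qquad B_{j,k}=\frac h2 x_j\bigl(A_{i,k}-A_{j,k}\bigr)\quad(k\neq j).
\]
So the whole statement reduces to showing that $B$ is triangular with diagonal $(K_{i,j})_{j\neq i}$.

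This last step is the only non-routine part, and it is where the tree structure enters. The key fact is a \emph{propagation lemma}: for the adjacency matrix of a tree-system, $A_{a,k}=A_{b,k}$ whenever the vertex $k$ does not lie on the unique $a$--$b$ path in $T$. This follows by chaining the defining identities $A_{u,k}=A_{v,k}$ — valid for every edge $(u,v)$ and every $k\notin\{u,v\}$ — along that path, which by hypothesis avoids $k$. Taking $a=i$ and $b=j$ shows $B_{j,k}=0$ unless $k$ lies strictly between $i$ and $j$, i.e.\ unless $k$ is a proper ancestor of $j$ in $T$ rooted at $i$. Ordering $\{1,\dots,n\}\setminus\{i\}$ by graph distance from $i$ (ties broken arbitrarily) therefore makes $B$ lower triangular, whence $\det B=\prod_{j\neq i}K_{i,j}$; combining the three displays yields $x_i'=x_i\prod_{j\neq i}K_{i,j}/|\M|$.

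The main obstacle is essentially just the triangularity of $B$, which carries all the content of the proposition; once the propagation lemma is available it is immediate. I would state that lemma separately, since the same web of identities $A_{u,k}=A_{v,k}$ drives the cofactor computations in the rest of Section 3. Everything else — the Cramer step, the factoring of $x_i$, and keeping track of which entries of $\M$ versus $\M^{(i)}$ have been modified — is routine linear algebra, though the bookkeeping rewards care.
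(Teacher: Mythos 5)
Your proof is correct, and it takes a genuinely different route from the paper's. The paper also starts from Cramer's rule and the target identity $|\M^{(i)}|=x_i\prod_{j\neq i}K_{i,j}$, but it establishes it by a divisibility argument: $x_i$ divides $|\M^{(i)}|$ by expansion along row $i$, each $K_{i,j}$ divides it because setting $K_{i,j}=0$ and performing row operations along the $i$--$j$ path makes two rows of the matrix equal, and the leftover constant is fixed to $1$ by comparing the $h^0$ terms. You instead evaluate the determinant exactly: after factoring $x_i$ out of row $i$ and clearing column $i$, the reduced matrix $B$ has diagonal entries $K_{i,j}$ and off-diagonal entries $\tfrac{h}{2}x_j\bigl(A_{i,k}-A_{j,k}\bigr)$, and your propagation lemma (chaining the edge identities $A_{u,k}=A_{v,k}$ along the $i$--$j$ path, which is precisely the property the paper invokes via \cite[Definition 3, Corollary 8]{KQM}) shows these vanish unless $k$ is an interior vertex of that path, so ordering the vertices by distance from $i$ makes $B$ triangular and $\det B=\prod_{j\neq i}K_{i,j}$. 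Both arguments exploit the tree structure through the same web of identities, but yours buys a direct computation that bypasses the coprimality/irreducibility considerations implicit in the paper's ``each factor divides, hence the product divides'' step and its normalisation of the constant. One small slip: after factoring $x_i$ from row $i$ you describe the remaining rows of $\widetilde\M$ as rows of $\M$, whereas they are rows of $\M^{(i)}$ (their $i$th entries are $x_j$, not $-\tfrac{h}{2}x_jA_{j,i}$); your subsequent computation uses the correct values, so nothing is affected.
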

\begin{proof}
Let $\M^{(i)}$ be the matrix obtained from $\M$ by replacing the $i$th column by $\x$. By Cramer's rule we need to show
\[
|\M^{(i)}|=x_i\prod_{j\neq i} K_{i,j}.
\]
The off-diagonal entries in $\M$ are linear in $h$. The diagonal entries are affine, of the form $1+h(\cdots)$. Therefore we have $|\M^{(i)}|=x_i+h(\cdots)$. Hence, if
\[
|\M^{(i)}|=cx_i\prod_{j\neq i} K_{i,j},
\] then $c=1$, and it suffices to show that $x_i$ and $K_{i,j}$ ($j\neq i$) are divisors of $|\M^{(i)}|$. It follows that $x_i$ is a divisor by expanding $|\M^{(i)}|$ in the $i$th row. We prove that $K_{i,j}$ ($j\neq i$) is a divisor by establishing
\[
K_{i,j}=0\implies |\M^{(i)}|=0.
\]
Let ${\bf k}=k_1,k_2,\ldots,k_m$ be the path from $k_1=i$ to $k_m=j$, i.e., for all $l$ we have that either $(k_l,k_{l+1})$ or $(k_{l+1},k_l)$ is an edge in $T$. Let us create a matrix $\M^{[i]}$ by dividing the $j$th row of $\M^{(i)}$ by $x_j$. The $i$th column of $\M^{[i]}$ is ${\bf 1}$, and the other elements, apart from the diagonal ones, are $\M^{[i]}_{k,l}=-\frac{h}{2}A_{k,l}$. Modulo $K_{i,j}$ we have
\[
M^{[i]}_{j,j}\mid_{K_{i,j}=0}=\frac{1-\frac{h}{2}\big((\A.\x)_j+A_{j,j}x_j\big)}{x_j}\mid_{K_{i,j}=0}
        =-\frac{h}{2}A_{i,j}.
\]
From this, and from \cite[Definition 3]{KQM}, as ${\bf k}$ is the path from $i$ to $j$, it follows that
\begin{equation}\label{uf}
M^{[i]}_{i,j}=M^{[i]}_{k_l,j} \text{ for all } l=1,\ldots,m.
\end{equation}
Consider the $k_{m-1}$st and the $k_m$th row of $\M^{[i]}$. As $(k_m,k_{m-1})$ or $(k_{m-1},k_m)$ is an edge in $T$, due to \cite[Corollary 8]{KQM} and the fact that the $i$th column of $\M^{[i]}$ is ${\bf 1}$, we have
\begin{equation}\label{uf2}
M^{[i]}_{k_{m-1},l}=M^{[i]}_{k_{m},l} \text{ for all } l \neq k_{m-1},k_m.
\end{equation}
Because of \eqref{uf}, equation \eqref{uf2} also holds for $l=k_m$, and thus the rows differ only in the $k_{m-1}$st column. We can now add a (non-zero) multiple of row $k_m$ to a (non-zero) multiple of row $k_{m-1}$ to create a new row $k_{m-1}$ where the element in the $k_{m-1}$st column is replaced by a scalar quantity of choice. We choose the scalar to be
\begin{equation}\label{uf3}
M^{[i]}_{k_{m-1},k_{m-1}}=-\frac{h}{2}A_{i,k_{m-1}}.
\end{equation}
We repeat the argument. From \eqref{uf3}, and from \cite[Definition 3]{KQM}, as there is a path from $i$ to $k_{m-1}$, it follows that
\begin{equation}\label{uf0}
M^{[i]}_{i,k_{m-1}}=M^{[i]}_{k_l,k_{m-1}} \text{ for all } l=1,\ldots,m-1.
\end{equation}
Considering the $k_{m-2}$nd and the $k_{m-1}$st row of $\M^{[i]}$, as either $(k_{m-1},k_{m-2})$ or $(k_{m-2},k_{m-1})$ is an edge in $T$, we have
\begin{equation}\label{uf4}
M^{[i]}_{k_{m-2},l}=M^{[i]}_{k_{m-1},l} \text{ for all } l \neq k_{m-2},k_{m-1}.
\end{equation}
Because of \eqref{uf0}, equation \eqref{uf4} also holds for $l=k_{m-1}$, and thus the rows differ only in the $k_{m-2}$nd column. We can now add a (non-zero) multiple of row $k_m$ to a (non-zero) multiple of row to create a new row $k_{m-2}$ where the element in the $k_{m-2}$nd column is choosen to be
\[
M^{[i]}_{k_{m-2},k_{m-2}}=-\frac{h}{2}A_{i,k_{m-2}}.
\]
We continue making these elementary row-operations until we arrive at a matrix where
\begin{equation}\label{uf5}
M^{[i]}_{k_{2},k_{2}}=-\frac{h}{2}A_{i,k_2}.
\end{equation}
Now as $(i,k_2)$ or $(k_2,i)$ is an edge in $T$, we have
\begin{equation}\label{uf6}
M^{[i]}_{i,l}=M^{[i]}_{k_2,l} \text{ for all } l \neq i,k_2.
\end{equation}
Due to \eqref{uf5}, equation \eqref{uf6} also holds for $l=k_2$. But as the $i$th column of $\M^{[i]}$ equals ${\bf 1}$, equation \eqref{uf6} also holds for $l=i$. The rows are equal, and hence the determinant vanishes.
\end{proof}
\begin{example}
For the bushy tree on 4 vertices the Kahan discretisation satisfies $\M\x'=\x$ with
\begin{equation} \label{Mex}
\M=\begin{pmatrix}1 & 0 & 0 & 0 \\
 0 & 1 & 0 & 0 \\
 0 & 0 & 1 & 0 \\
 0 & 0 & 0 & 1 
 \end{pmatrix}
-\frac{h}{2}\begin{pmatrix}
C_1 + a_{1} x_{1} & b_{1} x_{1} & b_{2} x_{1} & b_{3} x_{1} \\
 c_{1} x_{2} & C_2 + a_{2} x_{2} & b_{2} x_{2} & b_{3} x_{2} \\
 c_{1} x_{3} & c_{2} x_{3} & C_3 + a_{3} x_{3} & b_{3} x_{3} \\
 c_{1} x_{4} & c_{3} x_{4} & b_{2} x_{4} & C_4 + a_{4} x_{4}
\end{pmatrix},
\end{equation}
where $C_i=(\A\x)_i$ is the cofactor of $x_i$. In terms of the functions
\begin{equation} \label{Ks}
\begin{split}
K_{2,1}(=K_{3,1}=K_{4,1})&=1+\frac{h}{2}(C_1+x_1(a_1-c_1)),\\
K_{1,2}&=1+\frac{h}{2}(C_2+x_2(a_2-b_1)),\\
K_{3,2}&=1+\frac{h}{2}(C_2+x_2(a_2-c_2)),\\
K_{4,2}&=1+\frac{h}{2}(C_2+x_2(a_2-c_3)),\\
K_{1,3}(=K_{2,3}=K_{4,3})&=1+\frac{h}{2}(C_3+x_3(a_3-b_2)),\\
K_{1,4}(=K_{2,4}=K_{3,4})&=1+\frac{h}{2}(C_4+x_4(a_4-b_3)),
\end{split}
\end{equation}
the Kahan map is explicitly given by
\begin{equation} \label{kmp}
\begin{pmatrix} x_1 \\ x_2 \\ x_3 \\ x_4 \end{pmatrix}'=\frac{1}{|\M|}\begin{pmatrix}
x_1K_{1,2}K_{1,3}K_{1,4} \\
x_2K_{2,1}K_{1,3}K_{1,4} \\
x_3K_{2,1}K_{3,2}K_{1,4} \\
x_4K_{2,1}K_{4,2}K_{1,3}
\end{pmatrix}.
\end{equation}
\end{example}

\begin{proposition} \label{prop2}
The cofactor of the Darboux polynomial $P_i$, which corresponds to the $i$-th edge $e_i=(u_i,v_i)$ (see Eq. \eqref{Pi}), is explicitly given by
\begin{equation*}
L_i\frac{\prod_{j\neq u_i,v_i} K_{u_i,j}}{|\M|}
\end{equation*}
where
\begin{equation} \label{Li}
L_{i}=1-\frac{h}{2}\left((\A.\x)_{u_i}-(A_{u_i,u_i}-A_{v_i,u_i})x_{u_i}\right),
\end{equation}
which is symmetric under $u_i\leftrightarrow v_i$.
\end{proposition}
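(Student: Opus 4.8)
The plan is to compute $P_i':=P_i\circ\bphi$ directly from the explicit form \eqref{mp} of the Kahan map and to read off its cofactor. Write $u=u_i$, $v=v_i$ and recall $P_i=(A_{v,u}-A_{u,u})x_u+(A_{v,v}-A_{u,v})x_v$. Substituting \eqref{mp} for $x_u'$ and $x_v'$ gives
\[
|\M|\,P_i' \;=\; (A_{v,u}-A_{u,u})\,x_u\prod_{k\neq u}K_{u,k} \;+\; (A_{v,v}-A_{u,v})\,x_v\prod_{k\neq v}K_{v,k}.
\]
The first step is to extract the common $K$-product. Since $e_i=(u,v)$ is an edge of $T$, the tree-adjacency property (\cite[Definition~3]{KQM}, already used in the proof of Proposition~\ref{expl}) gives $A_{u,k}=A_{v,k}$ for all $k\notin\{u,v\}$, and hence, by \eqref{Kij}, $K_{u,k}=K_{v,k}$ for all such $k$. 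Therefore $\prod_{k\neq u,v}K_{u,k}=\prod_{k\neq u,v}K_{v,k}=:\Pi$ is precisely the product appearing in the claimed cofactor, and $\prod_{k\neq u}K_{u,k}=K_{u,v}\,\Pi$, $\prod_{k\neq v}K_{v,k}=K_{v,u}\,\Pi$. After dividing by $\Pi$, what remains to be shown is the polynomial identity
\[
(A_{v,u}-A_{u,u})\,x_u\,K_{u,v} \;+\; (A_{v,v}-A_{u,v})\,x_v\,K_{v,u} \;=\; L_i\,P_i .
\]

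To prove this identity I would rewrite it as
\[
(A_{v,u}-A_{u,u})\,x_u\,(K_{u,v}-L_i) \;=\; (A_{v,v}-A_{u,v})\,x_v\,(L_i-K_{v,u})
\]
and evaluate the two differences from \eqref{Kij} and \eqref{Li}. The difference $L_i-K_{v,u}$ is immediate: $L_i$ and $K_{v,u}=1-\tfrac h2\big((\A.\x)_u+(A_{u,u}-A_{v,u})x_u\big)$ differ only in the sign of the term $\tfrac h2(A_{u,u}-A_{v,u})x_u$, so $L_i-K_{v,u}=h(A_{u,u}-A_{v,u})x_u$. For $K_{u,v}-L_i$ I would use the edge property once more, in the form
\[
(\A.\x)_u-(\A.\x)_v \;=\; (A_{u,u}-A_{v,u})x_u+(A_{u,v}-A_{v,v})x_v
\]
(all other summands cancel because $A_{u,k}=A_{v,k}$ for $k\notin\{u,v\}$); substituting this into $K_{u,v}-L_i$ and simplifying gives $K_{u,v}-L_i=-h(A_{v,v}-A_{u,v})x_v$. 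Plugging both differences back in, each side of the rewritten identity equals $h(A_{u,u}-A_{v,u})(A_{v,v}-A_{u,v})x_ux_v$, which settles it. The same relation for $(\A.\x)_u-(\A.\x)_v$ also yields the stated symmetry of $L_i$ under $u\leftrightarrow v$, since it shows $(\A.\x)_u-(A_{u,u}-A_{v,u})x_u=(\A.\x)_v-(A_{v,v}-A_{u,v})x_v$. Finally one records that the denominator $|\M|$ of the cofactor shares no factor with $P_i$ (e.g.\ $P_i$ is a nonzero linear form, hence irreducible, and $|\M|=1+O(h)$ is not divisible by it), so the expression is a cofactor in the sense of the definition.

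The construction contains no structural difficulty: that $P_i$ is preserved at all is already guaranteed by \cite[Theorem~1]{ce2}, so the content is the explicit formula, and the only care needed is the bookkeeping --- using the edge relation $A_{u,k}=A_{v,k}$ ($k\notin e_i$) twice (to pull out $\Pi$, and to collapse $(\A.\x)_u-(\A.\x)_v$ to two terms), and keeping track of the several sign conventions in $P_i$, $L_i$, $K_{u,v}$ and $K_{v,u}$ so that the $h$-linear terms in $K_{u,v}-L_i$ combine correctly. I expect that reconciling those signs is the main (mild) obstacle.
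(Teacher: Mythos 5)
Your proposal is correct and follows essentially the same route as the paper: substitute the explicit Kahan map, use the edge property $A_{u,k}=A_{v,k}$ ($k\notin\{u,v\}$) to pull out the common product $\prod_{j\neq u,v}K_{u,j}$, and then verify the remaining identity $(A_{v,u}-A_{u,u})x_uK_{u,v}+(A_{v,v}-A_{u,v})x_vK_{v,u}=L_iP_i$, which in the paper rests on the same fact $P_i=(\A.\x)_v-(\A.\x)_u$. Your direct computation of the two differences $K_{u,v}-L_i$ and $L_i-K_{v,u}$ is a cleaner way of finishing than the paper's manipulation with the auxiliary parameter $Z$, and it also yields the symmetry of $L_i$ under $u\leftrightarrow v$, but it is the same argument in substance.
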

\begin{proof}
In order to not have to carry many indices, we fix $i$ and denote the $i$-th edge by $e_i=(u,v)$. The $i$-th DP is then given by
\begin{equation*}
P_i=(c_i-a_{u})x_u+(a_{v}-b_i)x_v.
\end{equation*}
We note that due to the property of matrix $\A$ \cite[Corollary 8]{KQM}, that $j\neq u,v \implies A_{u,j}=A_{v,j}$, we have $K_{u,j}=K_{v,j}$. Using Proposition \ref{expl}, we find
\begin{align*}
P^\prime_i&=(c_i-a_{u})x^\prime_u+(a_{v}-b_i)x^\prime_v\\
&=\left((c_i-a_{u}) x_u \prod_{j\neq u} K_{u,j}+(a_{v}-b_i) x_v \prod_{j\neq v} K_{v,j}\right)|M|^{-1}\\
&=F \left(\prod_{j\neq u,v} K_{u,j}\right) |M|^{-1},
\end{align*}
with $F=(c_i-a_{u}) x_u K_{u,v}+(a_{v}-b_i) x_v  K_{v,u}$.
The prefactor is, using the fact that $P_i=(\A.\x)_v-(\A.\x)_u$, equal to
\begin{align*}
F&=
(c_i-a_{u}) x_u \left(
1-\frac{h}{2}\left((\A.\x)_v+(a_{v}-b_i)x_v\right)\right)\\
&\ \ \ +(a_{v}-b_i) x_v \left(
1-\frac{h}{2}\left((\A.\x)_u+(a_{u}-c_i)x_u\right)
\right)\\
&=
(c_i-a_{u}) x_u \left(
1-\frac{h}{2}\left((\A.\x)_u+(c_i-a_{u})x_u+2(a_{v}-b_i)x_v-Z(a_{v}-b_i)x_v\right)\right)\\
&\ \ \ +(a_{v}-b_i) x_v \left(
1-\frac{h}{2}\left((\A.\x)_u+(a_{u}-c_i)x_u+Z(c_i-a_{u})x_u\right)
\right)\\
&=
\left((c_i-a_{u}) x_u + (a_{v}-b_i) x_v \right) \left(
1-\frac{h}{2}\left((\A.\x)_u-(a_{u}-c_i)x_u\right)\right)\\
&=P_iL_i.
\end{align*}
which holds for all $Z$ and in particular for $Z=2$.
\end{proof}
\begin{example}
In terms of the matrix \eqref{Mex}, the functions \eqref{Ks} and
\begin{equation*} 
\begin{split}
L_1=1+\frac{h}{2}(C_1-x_1(a_1-c_1))&=1+\frac{h}{2}(C_2-x_2(a_2-b_1))\\
L_2=1+\frac{h}{2}(C_2-x_2(a_2-c_2))&=1+\frac{h}{2}(C_3-x_3(a_3-b_2))\\
L_3=1+\frac{h}{2}(C_2-x_2(a_2-c_3))&=1+\frac{h}{2}(C_4-x_4(a_4-b_3)),
\end{split}
\end{equation*}
the cofactors of the preserved DPs \eqref{P123}, with respect to the Kahan map \eqref{kmp}, are given by
\[
\frac{L_1K_{1,3}K_{1,4}}{|\M|},\qquad
\frac{L_2K_{2,1}K_{1,4}}{|\M|},\qquad
\frac{L_3K_{2,1}K_{1,3}}{|\M|}.
\]
\end{example}

The following lemma will be used to establish an explicit expression for the Jacobian determinant for the Kahan map \eqref{mp}.
\begin{lemma} \label{lem}
The determinant of
\begin{equation*}
\Q=\d{\o}+\frac{h}{2}\left(\d{\x}.\A-\d{\A.\d{\x}.\o}\right).
\end{equation*}
is equal to
\[
|\Q|=\prod_{i=1}^{n-1} L_i.
\]
\end{lemma}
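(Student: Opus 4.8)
\emph{Proposed proof.}\ The plan is to view $|\Q|$ as a polynomial in $h$ (the entries of $\A$ and $\x$ being regarded as generic), to show that every $L_i$ divides it, to match the degrees in $h$, and to pin down the remaining constant by setting $h=0$. Write $\Q=\d{\o}+\tfrac h2 N$ with $N:=\d{\x}.\A-\d{\A.\d{\x}.\o}$, so that $N$ has entries $N_{ij}=x_iA_{ij}-\delta_{ij}(\A\x)_i$ and hence
\[
\Q_{ij}=\tfrac h2\,x_iA_{ij}\quad(i\neq j),\qquad \Q_{ii}=1-\tfrac h2\sum_{k\neq i}A_{ik}x_k .
\]
A rearrangement of \eqref{Li} then gives the two presentations $L_i=\Q_{u_i,u_i}-\tfrac h2 A_{v_i,u_i}x_{u_i}=\Q_{v_i,v_i}-\tfrac h2 A_{u_i,v_i}x_{v_i}$ (their equality being precisely the stated $u_i\leftrightarrow v_i$ symmetry), and both will be used.

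The heart of the argument is a single row relation. Fix $i$ and write $e_i=(u,v)$; let $R_u,R_v$ be the $u$-th and $v$-th rows of $\Q$. By the defining property of $\A$, namely $A_{u,k}=A_{v,k}$ for $k\notin\{u,v\}$ (cf.\ \cite[Corollary 8]{KQM}), the entries of $x_vR_u-x_uR_v$ vanish in every column $k\notin\{u,v\}$, while in columns $u$ and $v$ the two presentations of $L_i$ give exactly $x_vL_i$ and $-x_uL_i$. Hence
\[
x_vR_u-x_uR_v=L_i\,\bigl(x_v\,{\bf e}_u-x_u\,{\bf e}_v\bigr),
\]
with ${\bf e}_u,{\bf e}_v$ the coordinate row vectors. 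Replacing row $u$ of $\Q$ by $x_vR_u-x_uR_v$ scales the determinant by $x_v$ and produces a matrix whose $u$-th row is divisible by $L_i$; expanding along that row gives $x_v|\Q|=L_iG_i$ for some polynomial $G_i$. Since $L_i$ is irreducible (it has degree one in $h$) and does not divide $x_v$, it follows that $L_i\mid|\Q|$.

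A short comparison of the coefficients of $x_{u_i}$ and of $x_{v_i}$ in the $h$-linear parts of $L_i$ and $L_j$ (for $i\neq j$) shows that $L_1,\dots,L_{n-1}$ are pairwise distinct, hence pairwise non-associate, so $\prod_{i=1}^{n-1}L_i\mid|\Q|$. To identify the quotient with $1$, note first that $N\x=0$, since $(N\x)_i=x_i(\A\x)_i-(\A\x)_ix_i=0$; therefore $\det N=0$, the top-degree ($h^n$) term of $|\Q|=\det(\d{\o}+\tfrac h2 N)$ vanishes, and $\deg_h|\Q|\le n-1$. On the other hand each $L_i$ has degree exactly one in $h$, so $\prod_{i=1}^{n-1}L_i$ has degree exactly $n-1$ in $h$. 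Writing $|\Q|=g\prod_{i=1}^{n-1}L_i$, the cofactor $g$ must therefore be free of $h$; evaluating at $h=0$, where $\Q=\d{\o}$ and every $L_i=1$, forces $g=1$. Hence $|\Q|=\prod_{i=1}^{n-1}L_i$.

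The one delicate point is the displayed row relation: one must verify that $x_vR_u-x_uR_v$ collapses to $L_i$ times a single coordinate difference rather than to $L_i$ times a larger vector, and this is exactly where the two forms $L_i=\Q_{u_i,u_i}-\tfrac h2A_{v_i,u_i}x_{u_i}=\Q_{v_i,v_i}-\tfrac h2A_{u_i,v_i}x_{v_i}$ together with the $u_i\leftrightarrow v_i$ symmetry enter. Everything else is routine: the spurious factor $x_v$ is harmless because it is coprime to $L_i$; the bound $\deg_h|\Q|\le n-1$ comes from the one-line identity $N\x=0$; and the constant is fixed immediately at $h=0$. A proof by induction on the number of vertices of $T$ — removing a leaf and expanding $|\Q|$ along its row and column — should also work but looks more laborious.
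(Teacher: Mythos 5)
Your proof is correct and follows essentially the same route as the paper: you show each $L_i$ divides $|\Q|$ via the dependence of rows $u_i$ and $v_i$ (your exact identity $x_vR_u-x_uR_v=L_i(x_v\,{\bf e}_u-x_u\,{\bf e}_v)$ is just a sharpened form of the paper's ``$L_i=0\implies$ rows proportional $\implies|\Q|=0$''), bound $\deg_h|\Q|$ by $n-1$ using the same column relation $N\x=0$, and fix the constant at $h=0$. The only difference is that you make explicit the irreducibility, coprimality and UFD points that the paper leaves implicit, which is a welcome tightening rather than a new approach.
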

\begin{proof}
By the same argument as in the proof of Proposition \ref{expl}, if, for some constant $c$, we have \[
|\Q|=c\prod_{i=1}^{N-1} L_i,
\]
then $c=1$. Furthermore, as a linear combination of the columns $\q_i$ of $\Q-\d{\o}$, \[
\sum_ix_i\q_i={\bf 0},
\]
vanishes, the degree of $|\Q|$ in $h$ is 3. Therefore, it suffices to prove that
\[
L_i=0\implies |\Q|=0,
\]
for $i=1,\ldots, n-1$. Let $e_i=(u,v)$ again. We will show that the $u$th row and the $v$th row are dependent when $L_i=0$. For all $j\neq u,v$ we have
\[
x_vQ_{u,j}=x_v\frac{h}{2}x_uA_{u,j}=x_u\frac{h}{2}x_vA_{v,j}=x_uQ_{v,j}.
\]
When $j=u$ we have
\[
x_vQ_{u,u}\mid_{L_i=0}=x_v\left(1-\frac{h}{2}\left(\A.\x-A_{u,u}x_u\right)\right)\mid_{L_i=0}
=x_v\left(\frac{h}{2}x_uA_{v,u}\right)=x_uQ_{v,u}.
\]
and, by interchanging $u,v$ in the above, when $j=v$ we have
\[
x_vQ_{u,v}=x_uQ_{v,v}\mid_{L_i=0}.
\]
\end{proof}
\begin{example}
The matrix $\Q$ does not depend on the parameters $a_i$. For our running example we have
\[
\Q=\begin{pmatrix}
1 & 0 & 0 & 0 \\
0 & 1 & 0 & 0 \\
0 & 0 & 1 & 0 \\
0 & 0 & 0 & 1 
 \end{pmatrix} +\frac{h}{2}
\begin{pmatrix}
D_1 &   x_{1} b_{1} &   x_{1} b_{2} &   x_{1} b_{3} 
\\
   x_{2} c_{1} & D_2 &   x_{2} b_{2} & x_{2} b_{3} 
\\
   x_{3} c_{1} &   x_{3} c_{2} & D_3 & x_{3} b_{3} 
\\
   x_{4} c_{1} &   x_{4} c_{3} &   x_{4} b_{2} & D_4
\end{pmatrix},
\]
where
\begin{align*}
D_1&=-b_{1} x_{2}-b_{2} x_{3}-b_{3} x_{4},\quad
D_2=-b_{2} x_{3}-b_{3} x_{4}-c_{1} x_{1},\\
D_3&=-b_{3} x_{4}-c_{1} x_{1}-c_{2} x_{2},\quad
D_4=-b_{2} x_{3}-c_{1} x_{1}-c_{3} x_{2}.
\end{align*}
\end{example}
\begin{proposition}
The Jacobian determinant for the Kahan map \eqref{mp} is
\begin{equation} \label{J}
J=\frac{\left(\prod_{i=1}^{n-1} L_i\right)\left(\prod_{i=1}^n \left(\prod_{j=1}^n K_{i,j}\right)/K_{i,i} \right)}{|\M|^{n+1}}.
\end{equation}
\end{proposition}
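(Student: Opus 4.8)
The plan is to compute the Jacobian matrix of the Kahan map $\bphi\colon\x\mapsto\x'$ directly from the implicit relation $\M\x'=\x$; this is cleaner than differentiating the explicit formula \eqref{mp} termwise, which would force one to track the rank-one contribution of $\nabla|\M|$. Writing $\x'=\M^{-1}\x$ and differentiating with the product rule, using $\partial\M^{-1}/\partial x_l=-\M^{-1}(\partial\M/\partial x_l)\M^{-1}$, the $l$-th column of the Jacobian matrix equals $\M^{-1}\big({\bf e}_l-(\partial\M/\partial x_l)\x'\big)$. Hence the Jacobian matrix is $\M^{-1}(\d{\o}-R)$, where $R_{k,l}:=\big((\partial\M/\partial x_l)\x'\big)_k$, and $J=|\d{\o}-R|/|\M|$. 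Since the off-diagonal entries of $\M$ are linear in the $x_i$ and the diagonal entries affine, a short computation of $\partial M_{k,m}/\partial x_l$ gives $R_{k,l}=-\tfrac h2\big(\delta_{k,l}(\A.\x')_k+x'_kA_{k,l}\big)$; in other words, $\d{\o}-R$ is the matrix $\M$ after the substitution $(\x,h)\mapsto(\x',-h)$.

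Next I would evaluate $|\d{\o}-R|$ by turning it into the matrix $\Q$ of Lemma \ref{lem}. Multiplying the $k$-th row of $\d{\o}-R$ by $x_k/x'_k$ turns each off-diagonal entry $\tfrac h2x'_kA_{k,l}$ into $\tfrac h2x_kA_{k,l}$ and the $(k,k)$-entry into $\tfrac{x_k}{x'_k}\big(1+\tfrac h2(\A.\x')_k\big)+\tfrac h2x_kA_{k,k}$. Using the componentwise form of $\M\x'=\x$, namely $x'_k\big(1-\tfrac h2(\A.\x)_k\big)=x_k\big(1+\tfrac h2(\A.\x')_k\big)$, this $(k,k)$-entry becomes $1-\tfrac h2(\A.\x)_k+\tfrac h2x_kA_{k,k}$, so the row-scaled matrix is precisely $\Q$. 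Therefore $\big(\prod_{k=1}^n x_k/x'_k\big)|\d{\o}-R|=|\Q|$, and Lemma \ref{lem} gives $|\d{\o}-R|=\big(\prod_{i=1}^{n-1}L_i\big)\prod_{k=1}^n(x'_k/x_k)$.

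Finally I would substitute $x'_k=x_k\prod_{j\neq k}K_{k,j}/|\M|$ from Proposition \ref{expl}, so that $\prod_{k=1}^n(x'_k/x_k)=\big(\prod_{k=1}^n\prod_{j\neq k}K_{k,j}\big)/|\M|^{n}$, and then combine with $J=|\d{\o}-R|/|\M|$ and $\prod_{j\neq k}K_{k,j}=\big(\prod_{j=1}^nK_{k,j}\big)/K_{k,k}$ to arrive at \eqref{J}. The only delicate point I anticipate is the sign bookkeeping in the first step, namely differentiating $\M^{-1}\x$ correctly and recognising $\d{\o}-R$ as $\M$ evaluated at $(\x',-h)$; after that the proof is just the single row-rescaling above combined with Lemma \ref{lem} and Proposition \ref{expl}, and it requires none of the graph-combinatorial arguments used earlier in the paper.
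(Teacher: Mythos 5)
Your proof is correct and follows essentially the same route as the paper: differentiate the implicit relation $\M\x'=\x$ to get $J=|\d{\o}-R|/|\M|$, rescale the rows by $x_k/x'_k$ and use the componentwise identity $x'_k\big(1-\tfrac h2(\A.\x)_k\big)=x_k\big(1+\tfrac h2(\A.\x')_k\big)$ to identify the rescaled matrix with $\Q$, then invoke Lemma \ref{lem} and Proposition \ref{expl}. The only (pleasant, but inessential) addition is your observation that $\d{\o}-R$ is $\M$ evaluated at $(\x',-h)$.
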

\begin{proof}
Let us differentiate the equation $\M\x^\prime=\x$. Denoting differentiation w.r.t. $x_k$ by ${}_{;k}$, the components satisfy (using Kronecker's delta and summation convention)
\[
M_{i,j}x^\prime_{j;k}+M_{i,j;k}x^\prime_{j}=\delta_{i,k}.
\]
Rearranging and taking the determinant we find that the Jacobian determinant is given by
\[
J=\frac{|\d{\o}-\X|}{|M|}, \text{ where } X_{i,k}= M_{i,j;k} x^\prime_{j}.
\]
We create a matrix $\Y$ by dividing, for $i=1,\ldots,n$, the $i$th row of $\d{\o}-\X$ by $x^\prime_i/x_i$. Then, using \eqref{mp},
\[
|\d{\o}-\X|=\frac{\prod_{i=1}^n \left(\prod_{j=1}^n K_{i,j}\right)/K_{i,i} }{|\M|^{n}} |\Y|.
\]
If we can show $\Y=\Q$, then, by Lemma \ref{mp}, equation \eqref{J} follows. We have
\[
M_{i,j;k}=
\begin{cases}
-\frac{h}{2}\delta_{i,k}A_{i,j} & i\neq j \\
-hA_{i,i} & i=j=k \\
-\frac{h}{2} A_{i,k} & i=j\neq k,
\end{cases}
\]
and hence
\[
Y_{i,k}=\left(\delta_{i,k}-M_{i,j;k}x^\prime_{j}\right)x_i/x_i^\prime=
\begin{cases}
\frac{h}{2}A_{i,j}x_i & i\neq k\\
\left(1+\frac{h}{2}\left((\A.\x^\prime)_i+A_{i,i}x_i^\prime\right) \right)x_i/x_i^\prime & i=k.
\end{cases}
\]
Now consider the $i$th component of $\M\x^\prime=\x$. With
\[
M_{i,j}=\begin{cases}
-\frac{h}{2} x_iA_{i,j} & i \neq j \\
1-\frac{h}{2}\left((\A.\x)_i+A_{i,i}x_i\right) & i=j
\end{cases}
\]
we find
\[
x_i=M_{i,j}x_j^\prime=-\frac{h}{2}x_i\sum_{j\neq i} A_{i,j}x_j^\prime + \left(1-\frac{h}{2}\left((\A.\x)_i+A_{i,i}x_i\right)\right)x_i^\prime
\]
which implies
\[
\left(1+\frac{h}{2}\left((\A.\x^\prime)_i+A_{i,i}x_i^\prime\right) \right)x_i/x_i^\prime
=1-\frac{h}{2}\left((\A.\x)_i-A_{i,i}x_i\right),
\]
and hence $\Y=\Q$.
\end{proof}
\begin{example}
The Jacobian determinant of the map \eqref{kmp} is, in terms of \eqref{Kij}, \eqref{Li}, and \eqref{Mex},
\begin{equation} \label{Jex}
J=\frac{L_1L_2L_3K_{1,2}K_{3,2}K_{4,2}(K_{2,1}K_{1,3}K_{1,4})^3}{|\M|^5}.
\end{equation}
\end{example}
\begin{theorem}
The expression \eqref{d} is a rational Darboux function of the Kahan map \eqref{mp} with cofactor $J$ given by \eqref{J}.
\end{theorem}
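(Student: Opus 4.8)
The plan is to combine the multiplicativity of Darboux cofactors with the explicit factorisations already established, thereby reducing the claim to a single combinatorial identity in the linear factors $K_{i,j}$.

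Since $d$ in \eqref{d} is a product of powers of the discrete Darboux polynomials $x_i$ and $P_j$, it is itself a rational Darboux function of the Kahan map \eqref{mp}, with cofactor equal to the corresponding product of the individual cofactors. Using Proposition \ref{expl} (so that $x_i'/x_i = |\M|^{-1}\prod_{j\neq i}K_{i,j}$) and Proposition \ref{prop2} (so that $P_j'/P_j = |\M|^{-1}L_j\prod_{k\neq u_j,v_j}K_{u_j,k}$),
\[
\frac{d'}{d}=\prod_{i=1}^n\Big(\frac{x_i'}{x_i}\Big)^{2-m_i}\prod_{j=1}^{n-1}\frac{P_j'}{P_j}
=|\M|^{-(n+1)}\Big(\prod_{j=1}^{n-1}L_j\Big)\ \prod_{i=1}^n\Big(\prod_{j\neq i}K_{i,j}\Big)^{2-m_i}\ \prod_{j=1}^{n-1}\prod_{k\neq u_j,v_j}K_{u_j,k},
\]
where the exponent $-(n+1)$ of $|\M|$ comes from $-\big(\sum_i(2-m_i)+(n-1)\big)$ together with the handshake identity $\sum_i m_i=2(n-1)$. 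Since \eqref{J} can be written as $J=|\M|^{-(n+1)}\big(\prod_j L_j\big)\prod_{i=1}^n\prod_{j\neq i}K_{i,j}$, the theorem is equivalent to
\[
\prod_{i=1}^n\Big(\prod_{j\neq i}K_{i,j}\Big)^{2-m_i}\ \prod_{j=1}^{n-1}\prod_{k\neq u_j,v_j}K_{u_j,k}\ =\ \prod_{i=1}^n\prod_{j\neq i}K_{i,j}.
\]

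To prove this identity I would use the tree structure of $\A$. By \cite[Corollary 8]{KQM}, $A_{i,j}=A_{i',j}$ — and therefore $K_{i,j}=K_{i',j}$ — whenever $i$ and $i'$ lie in the same connected component of $T-j$ (chain the edge relation $A_{u,k}=A_{v,k}$ along a path); in particular $K_{u_j,k}=K_{v_j,k}$ for $k\notin e_j$. Each $K_{i,j}$ is an affine function of $\x$ with constant term $1$, and for generic parameters two of these coincide precisely when they are identified as above; since the asserted identity is polynomial in the parameters, it suffices to verify it generically, where both sides become products of pairwise non-associate irreducibles and it reduces to matching, for each vertex $a$ and each component $C$ of $T-a$ of size $n_C$, the exponent of the common value $\kappa$ of the $K_{i,a}$ with $i\in C$. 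On the right-hand side $\kappa$ occurs once for each of the $n_C$ ordered pairs $(i,a)$ with $i\in C$, hence with exponent $n_C$. On the left-hand side, in the first product $\kappa$ occurs with exponent $\sum_{i\in C}(2-m_i)=1$, because $\sum_{i\in C}m_i=2(n_C-1)+1=2n_C-1$ (the $n_C-1$ internal edges of the subtree $C$ are counted twice, the unique edge joining $C$ to $a$ once); and in the second product a factor $K_{u_j,k}$ equals $\kappa$ only when its \emph{second} index is $a$, i.e.\ when $k=a$ (which forces $a\notin e_j$) and the edge $e_j$ lies inside $C$, of which there are exactly $n_C-1$. Thus the left-hand side also carries exponent $1+(n_C-1)=n_C$, proving the identity and hence the theorem.

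The main obstacle is the bookkeeping in the last step: one must notice that in the edge-product $\prod_j\prod_{k\neq u_j,v_j}K_{u_j,k}$ a factor contributes to the value $\kappa$ attached to a vertex $a$ only through having $a$ as its second index, so only the terms with $k=a$ are relevant — not the terms in which $a$ happens to be an endpoint $u_j$ or $v_j$ of the edge. Once this is recognised, the two elementary counts ($\sum_{i\in C}(2-m_i)=1$ and ``$C$ contains $n_C-1$ edges'') close the argument; everything else — the power of $|\M|$, the collection of the $L_j$, and the well-definedness of $d$ as a rational Darboux function — is immediate.
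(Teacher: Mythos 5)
Your proposal is correct and follows essentially the same route as the paper: multiplicativity of cofactors together with Propositions \ref{expl} and \ref{prop2} reduces everything to the combinatorial identity in the $K_{i,j}$, which you then verify by grouping the factors $K_{i,a}$ according to the component of $T-a$ containing $i$ and using the same two counts ($\sum_{i\in C}(2-m_i)=1$ and the $n_C-1$ internal edges) that the paper encodes via the exponents $z(u_i,v_i)$ in its chain of equalities \eqref{c1}--\eqref{c3}. The only difference is presentational (direct exponent bookkeeping per vertex--component group, with a dispensable genericity remark, instead of canonical edge representatives), so no further comparison is needed.
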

\begin{proof}
As the cofactor of a product is the product of the cofactors we find, due to $\sum_{i=1}^n m_i=2(n-1)$, and using $e_i=(u_i,v_i)$ for $i=1,\ldots,n-1$,
\begin{align*}
d^\prime&=d\prod_{i=1}^n \left(\frac{\prod_{j\neq i}K_{i,j}}{|\M|} \right)^{2-m_i} \prod_{i=1}^{n-1} L_i\frac{\prod_{j\neq u_i,v_i} K_{u_i,j}}{|\M|}\\
&=d\frac{H\prod_{i=1}^{n-1} L_i}{|\M|^{n+1}},
\end{align*}
with
\begin{equation}\label{H}
H=\left(\prod_{i=1}^n \prod_{j\neq i} K_{i,j}^{2-m_i}\right)\left( \prod_{i=1}^{n-1} \prod_{j\neq u_i,v_i} K_{u_i,j}\right).
\end{equation}
As in the proof of Proposition 2, by $I^i$ we denote the index-set for which
\[
j\in I^i \Leftrightarrow \exists k\ e_k=(i,j) \text{ or } e_k=(j,i),
\]
so that the number of elements in $I^i$ equals $m_i$. And once again, for each vertex $i$ we view $T$ as a union of $m_i$ trees $T=\cup_{j\in I^i} T^i_j$ which are connected at $i$. Recall that $z(i,j)$ is the number of edges in $T^i_j$. We claim that
\begin{align}
H&=\left(\prod_{i=1}^{n-1}K_{u_i,v_i}K_{v_i,u_i}\right)
\left(\prod_{i=1}^{n-1}K_{u_i,v_i}^{z(u_i,v_i)-1}K_{v_i,u_i}^{z(v_i,u_i)-1}\right)\label{c1}\\
&=\prod_{i=1}^{n-1}K_{u_i,v_i}^{z(u_i,v_i)}K_{v_i,u_i}^{z(v_i,u_i)}\label{c2}\\
&=\prod_{i=1}^n \left(\prod_{j=1}^n K_{i,j}\right)/K_{i,i}\label{c3},
\end{align}
which would show $J$ is the cofactor of $d$.

\cite[Definition 3]{KQM} states that the weight of edge $(i,j)\in T$ equals the weight of $(k,j)$ or $(j,k)$ with $i\in T^j_k$, and \cite[Proposition 7]{KQM} states that if $(u,v)$ is an edge in $T$, then for all $w\neq u,v$ the edges $(u,w)$, $(v,w)$ carry the same weight. It is easy to see that the converse is also true, i.e., if the edges $(u,w)$, $(v,w)$ carry the same weight, for all edges $(u,v)\in T$ and $w\neq u,v$, then the weight of edge $(i,j)\in T$ equals the weight of $(k,j)$ or $(j,k)$ with $i\in T^j_k$. Those properties carry over to the entries of the adjacency matrix $A_{i,j}$, cf. \cite[Corollary 8]{KQM}, and to the functions $K_{i,j}$ as defined by \eqref{Kij}, cf. the proof of Proposition \ref{prop2}. Consider the expression \eqref{c3}. Let $(k,j)$ or $(j,k)$ be an edge in $T$, then $k\in I^j$. Since $i\in T^j_k\setminus \{j\}$, iff $K_{i,j}=K_{k,j}$ the degree of $K_{k,j}$ in \eqref{c3} is $z(k,j)$ and hence \eqref{c3} equals \eqref{c2}.
Next, consider the first factor of \eqref{H}. Because
\[
\sum_{\underset{i\neq j}{i\in T^j_k}}\Big( 2-m_i \Big) = 1,
\]
we have
\begin{align*}
\prod_{i=1}^n \prod_{j\neq i} K_{i,j}^{2-m_i}&=\prod_{j=1}^n \prod_{i\neq j} K_{i,j}^{2-m_i}
=\prod_{j=1}^n \prod_{k\in I^j} \prod_{\underset{i\neq j}{i\in T^j_k}} K_{i,j}^{2-m_i}
=\prod_{j=1}^n \prod_{k\in I^j} K_{k,j}\\
&=\prod_{i=1}^{n-1} K_{u_i,v_i}K_{v_i,u_i},
\end{align*}
which is the first factor of \eqref{c1}. Finally, the second factor of \eqref{H} is
\begin{align*}
\prod_{i=1}^{n-1} \prod_{j\neq u_i,v_i} K_{u_i,j}
&=\prod_{j=1}^{n} \prod_{\underset{j\neq u_i,v_i}{i=1}}^{n-1} K_{u_i,j}
=\prod_{j=1}^{n} \prod_{k\in I^j} \prod_{\underset{i\neq j,k}{i\in T^j_k}} K_{i,j}
=\prod_{j=1}^{n} \prod_{k\in I^j} K_{k,j}^{z(k,j)-1}\\
&=\prod_{i=1}^{n-1}K_{u_i,v_i}^{z(u_i,v_i)-1}K_{v_i,u_i}^{z(v_i,u_i)-1},
\end{align*}
which is the second factor of \eqref{c1}.
\end{proof}
\begin{example}
The expression \eqref{d} is a Darboux function of the map \eqref{kmp} which has cofactor $J$, the Jacobian determinant \eqref{Jex}.
\end{example}

\section{Integrals for Kahan maps of LV-systems on graphs}
A class of homogeneous Lotka--Volterra systems is associated with any graph $G$ on $n$ vertices; when $G$ contains both a tree with $n$ vertices and a cycle of length 3 or greater, we call such a system a {\em (Lotka--Volterra) $G$-system}. Each edge of the graph is associated with a DP (preserved under Kahan discretisation), and each subgraph of $G$ that is a tree on $n$ vertices is associated with an invariant measure (preserved under Kahan discretisation). A ratio of two invariant measures is a first integral, as illustrated in the following example.

\begin{example}
Consider the 4D Lotka--Volterra system with matrix
\[
\A=\begin{pmatrix} a_{{1}}&b_{{1}}&b_{{2}}&b_{{3}} \\
c_{{1}}&a_{{2}}&b_{{2}}&b_{{3}}\\
c_{{1}}&c_{{2}}&a_{{3}}&b_{{3}}\\
c_{{1}}&c_{{2}}&b_{{2}}&a_{{4}}
\end{pmatrix}.
\]
obtained from \eqref{Mex} by taking $c_3=c_2$. The system admits four additional DPs
\begin{align*}
P_{{1}}=(c_{{1}}-a_{{1}})x_{{1}}+(a_{{2}}-b_{{1}})x_{{2}}\qquad &P_{{2}}=(c_{{2}}-a_{{2}})x_{{2}}+(a_{{3}}-b_{{2}})x_{{3}}\\
P_{{3}}=(c_{{2}}-a_{{2}})x_{{2}}+(a_{{4}}-b_{{3}})x_{{4}}\qquad
&P_{{4}}=(b_{{2}}-a_{{3}})x_{{3}}+(a_{{4}}-b_{{3}})x_{{4}},
\end{align*}
one for each edge in the graph of Figure \ref{Gfv}.
\begin{figure}[h]
\begin{center}
\scalebox{.7}{\begin{tikzpicture}
    \node[shape=circle,draw=black,line width=0.5mm] (1) at (-2,2) {1};
	\node[shape=circle,draw=black,line width=0.5mm] (2) at (0,0) {2};
	\node[shape=circle,draw=black,line width=0.5mm] (3) at (2,2) {3};
	\node[shape=circle,draw=black,line width=0.5mm] (4) at (4,0) {4};
	\path [-,line width=0.5mm, ] (1) edge node[above] {} (2);
    \path [-,line width=0.5mm, ] (2) edge node[above] {} (3);
    \path [-,line width=0.5mm, ] (3) edge node[above] {} (4);
    \path [-,line width=0.5mm, ] (2) edge node[above] {} (4);
\end{tikzpicture}}
\caption{\label{Gfv} Graph on $4$ vertices.}
\end{center}
\end{figure}
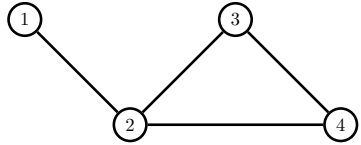
We identify three subgraphs, as in Figure \ref{tsg}.
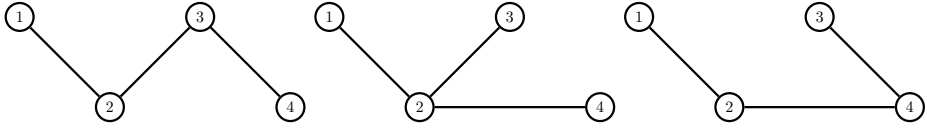
\begin{figure}[h]
\begin{center}
\scalebox{.6}{\begin{tikzpicture}
    \node[shape=circle,draw=black,line width=0.5mm] (1) at (-2,2) {1};
	\node[shape=circle,draw=black,line width=0.5mm] (2) at (0,0) {2};
	\node[shape=circle,draw=black,line width=0.5mm] (3) at (2,2) {3};
	\node[shape=circle,draw=black,line width=0.5mm] (4) at (4,0) {4};
	\path [-,line width=0.5mm, ] (1) edge node[above] {} (2);
    \path [-,line width=0.5mm, ] (2) edge node[above] {} (3);
    \path [-,line width=0.5mm, ] (3) edge node[above] {} (4);
\end{tikzpicture}}
\scalebox{.6}{\begin{tikzpicture}
    \node[shape=circle,draw=black,line width=0.5mm] (1) at (-2,2) {1};
	\node[shape=circle,draw=black,line width=0.5mm] (2) at (0,0) {2};
	\node[shape=circle,draw=black,line width=0.5mm] (3) at (2,2) {3};
	\node[shape=circle,draw=black,line width=0.5mm] (4) at (4,0) {4};
	\path [-,line width=0.5mm, ] (1) edge node[above] {} (2);
    \path [-,line width=0.5mm, ] (2) edge node[above] {} (3);
    \path [-,line width=0.5mm, ] (2) edge node[above] {} (4);
\end{tikzpicture}}
\scalebox{.6}{\begin{tikzpicture}
    \node[shape=circle,draw=black,line width=0.5mm] (1) at (-2,2) {1};
	\node[shape=circle,draw=black,line width=0.5mm] (2) at (0,0) {2};
	\node[shape=circle,draw=black,line width=0.5mm] (3) at (2,2) {3};
	\node[shape=circle,draw=black,line width=0.5mm] (4) at (4,0) {4};
	\path [-,line width=0.5mm, ] (1) edge node[above] {} (2);
    \path [-,line width=0.5mm, ] (3) edge node[above] {} (4);
    \path [-,line width=0.5mm, ] (2) edge node[above] {} (4);
\end{tikzpicture}}\caption{\label{tsg} Three subgraphs that are trees.}
\end{center}
\end{figure}
Each tree in Figure \ref{tsg} comes with a measure, and these have the following densities
\[
d_{{1}}=x_{{1}}x_{{4}} P_1P_2P_4,\qquad
d_{{2}}=\frac{x_{{1}}x_{{3}}x_{{4}} P_1P_2P_3}{x_{{2}}},\qquad
d_{{3}}=x_{{1}}x_{{3}} P_1P_3P_4.
\]
Taking ratios $K_1=d_1/d_2$ and $K_2=d_1/d_3$ we find the following integrals
\[
K_{{1}}=\frac{x_{{2}} P_4}{x_{{3}} P_3},\qquad K_{{2}}=\frac{x_{{4}} P_2}{x_{{3}} P_3 },
\]
for the special case $c_3=c_2$ of the Kahan map \eqref{kmp}.
We note that these integrals are not independent, they satisfy
$(a_2 - c_2)K_1 + (a_4 - b_3)K_2 = a_3 - b_2$.
\end{example}

Not every graph gives rise to a unique class of $G$-systems.  

\begin{proposition} For a graph $G$ which contains a cycle, let $G'$ be the graph obtained from $G$ by adding edges between any pair of vertices in the cycle. Every $G$-system is a $G'$-system.
\end{proposition}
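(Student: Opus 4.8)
The plan is to show that the structural condition defining the LV class associated with a graph — namely that for each edge $(i,k)$ one has $A_{i,j}=A_{k,j}$ for all $j\notin\{i,k\}$, cf. \eqref{eq:Aij} — propagates from the edges of $G$ to all the edges added in forming $G'$. Since $G\subseteq G'$, once this is established the matrix $\A$ of an arbitrary $G$-system satisfies \eqref{eq:Aij} on every edge of $G'$; and $G'$ still contains the spanning tree and the cycle of length $\geq 3$ inherited from $G$, so the same LV-system qualifies as a $G'$-system.

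First I would fix the cycle of $G$ in question, say $C=(w_1,w_2,\dots,w_\ell)$ with edges $(w_s,w_{s+1})$, indices read cyclically. The new edges of $G'$ are the chords $(w_a,w_b)$ of $C$, so it suffices to prove that an arbitrary pair of distinct cycle vertices $w_a,w_b$ satisfies $A_{w_a,j}=A_{w_b,j}$ for all $j\notin\{w_a,w_b\}$. The key observation is that $C$ provides \emph{two} arcs from $w_a$ to $w_b$, meeting only at the endpoints $w_a$ and $w_b$.

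I would then split into two cases. If $j$ is not a vertex of $C$, pick either arc, say $w_a=w_{s_0},w_{s_1},\dots,w_{s_m}=w_b$; each consecutive pair is an edge of $G$ with $j\notin\{w_{s_t},w_{s_{t+1}}\}$, so \eqref{eq:Aij} applied edge by edge gives $A_{w_a,j}=A_{w_{s_1},j}=\dots=A_{w_b,j}$. If $j=w_c$ is a vertex of $C$ with $c\neq a,b$, then $w_c$ lies in the interior of exactly one of the two arcs; traversing the \emph{other} arc, none of its vertices equals $w_c$ (its interior misses $w_c$ by choice, and its endpoints $w_a,w_b$ differ from $w_c$), so again \eqref{eq:Aij} chains along that arc to yield $A_{w_a,w_c}=A_{w_b,w_c}$. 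These two cases are exhaustive, so $(w_a,w_b)$ satisfies \eqref{eq:Aij}.

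I do not expect a genuine obstacle here; the only point requiring a little care is the second case, where one must use the cyclic (rather than merely path-connected) structure to select the arc avoiding $j$ — this is exactly why a cycle, not just an arbitrary subtree, is needed. For completeness I would also note that when $\ell=3$ the claim is vacuous, since then $G'=G$, and that the generic non-degeneracy conditions attached to the Darboux polynomials of the new edges cut out the same Zariski-dense subset, so the class is genuinely unchanged.
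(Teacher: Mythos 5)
Your proof is correct and follows essentially the same route as the paper: both arguments chain the edge condition \eqref{eq:Aij} along an arc of the cycle joining the two chord endpoints, choosing the arc that avoids the third index (the paper states this in one line, without your split into the cases $j$ on/off the cycle, since in either case one of the two arcs misses $j$).
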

\begin{proof}
Let the cycle have length $\ell$ with edges (without loss of generality) 
$(1,2),(2,3),$ $\dots,(\ell-1,\ell),(\ell,1)$. The following equations are satisfied:
\begin{align*}
A_{t,k}&=A_{t+1,k},\ \forall\ 1\leq t < \ell,\ k\neq t,t+1,\\
A_{\ell,k}&=A_{1,k},\ \forall\ k\neq \ell,1.
\end{align*}
We have to prove
\[
A_{i,k}=A_{j,k},\ \forall\ 1\leq i\neq j \leq \ell, k\neq i,j.
\]
For all $k\neq i,j$, one of the paths $i,i+1,\ldots,j$ or $i,i-1,\ldots,j$ (where indices are taken modulo $\ell$) does not contain $k$.
In the first case we have $A_{i,k}=A_{i+1,k}=\cdots=A_{j,k}$, and otherwise $A_{i,k}=A_{i-1,k}=\cdots=A_{j,k}$.
\end{proof}

Therefore the graphs in which we are interested are those for which restriction to any cycle yields a complete graph. (See Figures \ref{fig:graph1}, \ref{fig:graph2} and \ref{fig:graph3} for the graphs on $n=4,5,6$ vertices.) These are the so-called block-graphs, in which every biconnected component is complete \cite{harary}.

The Kahan map of a $G$-system has an invariant measure corresponding to each subgraph which is an $n$-tree. There can be many such subgraphs, as many as $\frac{1}{2}n!$ (for the complete graph on $n$ nodes). The ratio of any two such invariant measures is an integral of the Kahan map.  

\begin{proposition}
Let $G$ be a graph on $n$ vertices containing a complete subgraph of size $\ell\ge 3$. The Kahan map of
a $G$-system has at least $\ell-2$ functionally independent first integrals.
\end{proposition}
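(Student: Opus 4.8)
The plan is to exhibit each of the $\ell-2$ integrals as a ratio of two invariant measures attached to spanning trees of $G$ that differ by a single edge-swap inside the complete subgraph. First I would normalise so that the complete subgraph of size $\ell$ lies on the vertices $1,\dots,\ell$. Since a $G$-system requires $G$ to contain an $n$-vertex tree, $G$ is connected, so the path $1\!-\!2\!-\!\cdots\!-\!\ell$ (whose edges all belong to the complete subgraph, hence to $G$) extends to a spanning tree $T_0$ of $G$. The crucial remark is that every spanning tree $T$ of $G$ is an $n$-tree for the \emph{same} adjacency matrix $\A$: the defining relations of a $G$-system hold along every edge of $G$, hence along every edge of $T$. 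By Proposition~\ref{meas}, and because the reciprocal density \eqref{d} of a tree-system is preserved by its Kahan discretisation (Section~3), each such $T$ then carries an invariant measure of the Kahan map \eqref{mp} with reciprocal density $d_T=\prod_{i=1}^n x_i^{2-m_i(T)}\prod_{e\in T}P_e$, where $m_i(T)$ is the degree of $i$ in $T$ and the $P_e$ are the Darboux polynomials \eqref{Pi}, which depend only on $\A$ and not on $T$; the ratio of two such reciprocal densities is a first integral of the Kahan map, since both have the Jacobian determinant as cofactor.

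Next I would construct the swapped trees: for $j=3,\dots,\ell$ set $T_j:=\bigl(T_0\setminus\{(j-1,j)\}\bigr)\cup\{(1,j)\}$. The unique path from $1$ to $j$ in $T_0$ is $1\!-\!2\!-\!\cdots\!-\!j$, so deleting the edge $(j-1,j)$ separates $1$ from $j$, and re-adding $(1,j)\in G$ reconnects; hence $T_j$ is again a spanning tree of $G$. Passing from $T_0$ to $T_j$ changes only the degrees of vertices $1$ (by $+1$) and $j-1$ (by $-1$), and in the edge-product only replaces $P_{(j-1,j)}$ by $P_{(1,j)}$. Thus $d_{T_0}/d_{T_j}$ collapses to
\[
K_j=\frac{x_1\,P_{(j-1,j)}}{x_{j-1}\,P_{(1,j)}},\qquad j=3,\dots,\ell,
\]
so these are $\ell-2$ first integrals of the Kahan map of the $G$-system (for $\ell=3$ this recovers, after relabelling, the integral of the Example above; using $P_{(i,k)}=(\A\x)_k-(\A\x)_i$ on edges one can write them more symmetrically).

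Finally I would check functional independence, which I expect to be the only delicate point. Each $K_j$ depends only on $x_1,x_{j-1},x_j$, so in the $(\ell-2)\times(\ell-2)$ Jacobian submatrix with rows $K_3,\dots,K_\ell$ and columns $x_3,\dots,x_\ell$ the entry $\partial K_j/\partial x_i$ vanishes for $i>j$; the submatrix is lower triangular and its determinant is $\prod_{j=3}^\ell \partial K_j/\partial x_j$. I would then show each diagonal factor is a nonzero rational function: from $P_{(j-1,j)}=(A_{j,j-1}-A_{j-1,j-1})x_{j-1}+(A_{j,j}-A_{j-1,j})x_j$ and $P_{(1,j)}=(A_{j,1}-A_{1,1})x_1+(A_{j,j}-A_{1,j})x_j$, the numerator of $\partial K_j/\partial x_j$ is a linear form whose coefficient of $x_1$ equals $(A_{j,j}-A_{j-1,j})(A_{j,1}-A_{1,1})$, which is nonzero exactly because $P_{(j-1,j)}$ and $P_{(1,j)}$ are genuine Darboux polynomials of the $G$-system, i.e.\ the nondegeneracy conditions $(A_{k,k}-A_{i,k})(A_{k,i}-A_{i,i})\neq0$ hold for the edges $(j-1,j)$ and $(1,j)$. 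Hence the submatrix determinant is a nonzero rational function, the Jacobian of $(K_3,\dots,K_\ell)$ has full rank $\ell-2$ on a dense open set, and the integrals are functionally independent. Everything outside this last check is bookkeeping with tree degrees plus the already-proved measure-preservation results, so the main obstacle is simply organising the independence argument cleanly.
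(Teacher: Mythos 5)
Your proposal is correct and takes essentially the same route as the paper: both produce the $\ell-2$ integrals as ratios of the invariant tree-measure densities \eqref{dd} for spanning trees that differ by a single edge swap inside the complete subgraph, and both deduce functional independence from a triangular Jacobian structure. The only differences are cosmetic (you swap the chords $(1,j)$ for the path edges $(j-1,j)$ instead of deleting consecutive edges of an $\ell$-cycle, and you spell out the nonvanishing of the diagonal Jacobian entries, which the paper leaves implicit).
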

\begin{proof}
Consider the subgraph of $G$ consisting of a cycle contained in the complete subgraph together with a number of trees attached to its vertices, such that the subgraph contains $n$ edges. Deleting any edge in the cycle yields a tree and an associated invariant measure of the Kahan map.
Let $(i,j,k)$ be three adjacent vertices in the cycle. The integral given by the ratio of the densities \eqref{dd} corresponding to the two trees given by (i) deleting edge $\alpha :=(i,j)$ and (ii) deleting edge $\beta :=(j,k)$, is
$$ \frac{x_i P_\beta}{x_k P_{\alpha}},$$
the factors of \eqref{dd} corresponding to all other edges and vertices cancelling.

To show functional independence, consider without loss of generality the cycle with edges $\{(1,2),\dots,(\ell-1,\ell),(\ell,1)\}$ with 
edges labelled $\{1,\dots,\ell\}$ respectively. The $\ell-2$ integrals
\[
\frac{x_1 P_{2}}{x_{3}P_1},\frac{x_2 P_{3}}{x_{4}P_2},\ldots,\frac{x_{\ell-2}P_{\ell-1}}{x_{\ell}P_{\ell-2}},
\]
where the $i$th integral is a rational function of $x_i$, $x_{i+1}$, and $x_{i+2}$, are functionally independent because the Jacobian matrix of these functions is upper triangular.
\end{proof}

In dimension 4, 5, and 6 there are 2, 6, and 16 classes of $G$-systems, respectively. The graphs associated with each class are shown in Figures \ref{fig:graph1}, \ref{fig:graph2} and \ref{fig:graph3}, respectively.

\begin{proposition}
The  number of functionally independent first integrals of the Kahan map of a $G$-system is at least
\begin{equation}\label{sellimt}
\sum_i (\ell_i-2),
\end{equation}
where $\ell_1,\ell_2,\dots$ are the sizes of the complete subgraphs of $G$.
\end{proposition}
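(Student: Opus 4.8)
The plan is to produce $\ell_s-2$ first integrals for each complete subgraph (block) $B_s$ of $G$ of size $\ell_s\ge 3$ exactly as in the previous proposition, and then to show that the $\sum_s(\ell_s-2)$ integrals so obtained are jointly functionally independent. First, recall that $G$ is a block-graph, so its maximal complete subgraphs are precisely its biconnected components; components of size $\le 2$ contribute nothing to \eqref{sellimt}. Fix a block $B_s$ with $|B_s|=\ell_s\ge 3$, choose a Hamiltonian cycle $\gamma_s$ of the complete graph $B_s$, and extend $\gamma_s$, using edges of $G$, to a connected spanning subgraph of $G$ with exactly $n$ edges (possible since $G$ is connected: a spanning tree of $G$ with $B_s$ contracted supplies the missing edges). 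Deleting the edges of $\gamma_s$ one at a time yields $\ell_s$ spanning trees, hence invariant measures of the Kahan map with densities \eqref{dd}, and the ratios of the measures obtained by deleting two \emph{adjacent} edges of $\gamma_s$ telescope — as in the previous proposition — to $\ell_s-2$ first integrals of the Kahan map, the one attached to the consecutive triple $i,j,k$ of $\gamma_s$ being the rational function $x_iP_\beta/(x_kP_\alpha)$ in $x_i,x_j,x_k$. Moreover, by the triangularity argument already used, the Jacobian of these $\ell_s-2$ integrals with respect to the $\ell_s-2$ vertices of $\gamma_s$ obtained by omitting two cycle-adjacent vertices is triangular with nowhere-vanishing diagonal.

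The one genuinely new point is that distinct blocks of a block-graph may share a cut vertex, so the variable sets of integrals from different blocks are not a priori disjoint. To control this I would use the block-cut tree of $G$: root it at a block node, so that every non-root block $B_s$ acquires a unique parent cut vertex $p_s$ (the cut vertex of $B_s$ nearest the root). For each such $B_s$ choose $\gamma_s$ so that $p_s$ is adjacent in $\gamma_s$ to some vertex $w_s$, and omit $\{p_s,w_s\}$; for the root block omit any two cycle-adjacent vertices. Let $U_s$ be the remaining $\ell_s-2$ vertices of $B_s$. The key observation is that any two blocks meet in at most one vertex, and when $B_s$ and $B_t$ do meet, the shared vertex is the parent cut vertex of whichever of the two lies deeper in the rooted block-cut tree, hence lies outside that block's $U$. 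Consequently the sets $U_s$ are pairwise disjoint, and $\bigcup_s U_s$ has exactly $\sum_s(\ell_s-2)$ elements, equal to the total number of integrals.

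Finally I would assemble the Jacobian of all $\sum_s(\ell_s-2)$ integrals with respect to the variables $\bigcup_s U_s$, with rows grouped by block and columns grouped as the $U_s$, the blocks ordered so that ancestors in the rooted block-cut tree precede descendants. Since the integrals attached to $B_s$ depend only on vertices of $B_s$, and $V(B_s)$ can meet $U_t$ (for $t\neq s$) only in $p_s$, and only when $B_t$ is an ancestor of $B_s$, this matrix is block lower-triangular, with diagonal blocks precisely the per-block Jacobians from the first paragraph — each triangular with non-vanishing diagonal. Hence its determinant is a nonzero rational function, the matrix is non-singular on a dense open set, and the $\sum_s(\ell_s-2)$ integrals are functionally independent.

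I expect the main obstacle to be exactly the bookkeeping of the second and third paragraphs: verifying that omitting the parent cut vertex from each $U_s$ makes the $U_s$ pairwise disjoint, and that the only cross-block contributions to the global Jacobian are directed from a block toward its ancestors, so that the assembled matrix is block-triangular. Once that combinatorial arrangement is in place, everything else is a direct reprise of the two preceding propositions.
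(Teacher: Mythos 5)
Your proposal is correct, and the per-block construction (Hamiltonian cycle in each block, extension to a spanning unicyclic subgraph, ratios of adjacent tree-measures giving $x_iP_\beta/(x_kP_\alpha)$, triangular per-block Jacobian) is exactly the mechanism of the preceding proposition, as in the paper. Where you genuinely diverge is in the cross-block independence argument: the paper's proof rewrites each integral in terms of \emph{edge variables} $y=x_i/x_j$, observes that each integral depends on only two such variables and that distinct complete subgraphs share no edges (blocks of a block-graph meet in at most a cut vertex), and then orders integrals and edge variables to get one upper-triangular Jacobian; you instead stay in the vertex coordinates $x_i$ and resolve the shared-cut-vertex issue combinatorially, rooting the block-cut tree, discarding the parent cut vertex (plus a cycle-neighbour) from each block to obtain pairwise disjoint variable sets $U_s$, and assembling a block lower-triangular Jacobian whose diagonal blocks are the per-block triangular ones. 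Both arguments are sound; the paper's is shorter because passing to edge variables makes the disjointness automatic (edges, unlike vertices, are never shared between blocks), though it tacitly uses that the selected edge variables are themselves independent functions of $\x$, which your vertex-variable argument avoids at the cost of the block-cut-tree bookkeeping. One small imprecision: when two blocks are siblings attached to the same cut vertex $c$, neither ``lies deeper'' than the other, but then $c=p_s=p_t$ is omitted from both $U_s$ and $U_t$, so your disjointness and triangularity conclusions still hold; likewise ``nowhere-vanishing diagonal'' should read ``not identically vanishing'', which is all that functional independence on a dense open set requires.
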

\begin{proof}
This proposition is easy to prove after one realises that the integral
\begin{align*}
\frac{x_1 P_2}{x_{3}P_1}&=\frac{x_1(\alpha_2 x_2+\beta_2 x_3)}{x_{3}(\alpha_1 x_1+\beta_1 x_2)}\\&=\frac{x_1/x_2(\alpha_2+\beta_2 x_3/x_2)}{x_{3}/x_2(\alpha_1 x_1/x_2+\beta_1)}
\end{align*}
only depends on 2 variables $x_1/x_2=y_1$ and $x_2/x_3=y_2$, the so called edge variables, cf. \cite[Section 5]{KQM}. The integrals corresponding to one complete subgraph do not depend on the edge-variables of another complete subgraph. Hence the integrals and edge-variables can be ordered so that the 
Jacobian matrix is upper triangular.
\end{proof}

For each graph $G$ in Figures \ref{fig:graph1}, \ref{fig:graph2} and \ref{fig:graph3}, the ODE of the  $G$-system is superintegrable, while its Kahan discretisation is measure-preserving with at least one integral. The integer attached to each graph is the number of functionally independent integrals \eqref{sellimt}.

\begin{figure}[h]
\begin{center}
\includegraphics[height=2.2cm]{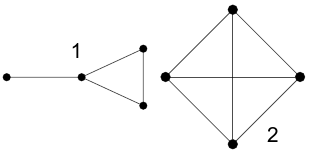}
\end{center}
\caption{\label{fig:graph1} The 2 graphs on 4 vertices associated with distinct classes of Lotka--Volterra $G$-systems.}
\end{figure}
\begin{figure}[h]
\begin{center}
\includegraphics[height=4.6cm]{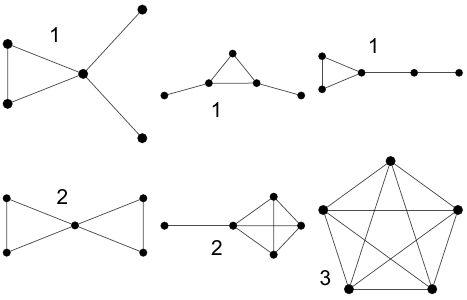}
\end{center}
\caption{\label{fig:graph2} The 6 graphs on 5 vertices associated with distinct classes of Lotka--Volterra $G$-systems.} 
\end{figure}
\begin{figure}[h]
\begin{center}
\includegraphics[height=9.5cm]{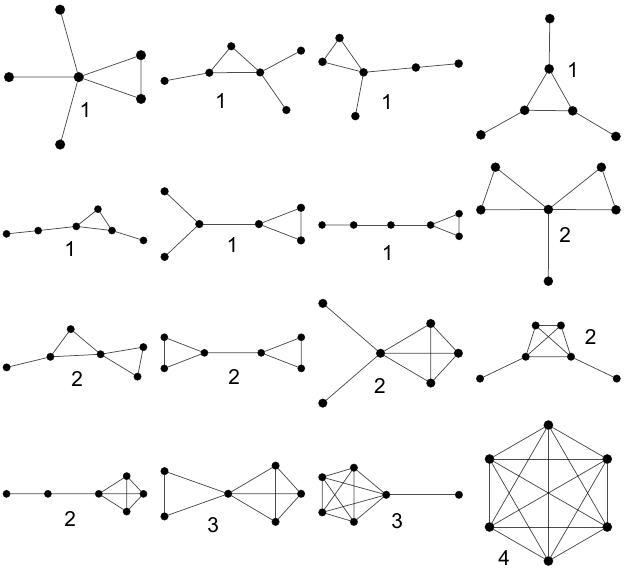}
\end{center}
\caption{\label{fig:graph3} The 16 graphs on 6 vertices associated with distinct classes of Lotka--Volterra $G$-systems.
The integer attached to each graph is the number of functionally independent integrals.}
\end{figure}

We have shown that the Kahan map for Lotka--Volterra $G$-systems in dimension $n$, where $G$ is the complete graph on $n$ vertices, is measure-preserving and has at least $n-2$ functionally independent integrals. This is not enough for integrability. If it had $n-1$ functionally independent integrals, it would be superintegrable, and this could in principle be detected via the method of algebraic entropy or degree growth \cite{kamp}. We implemented this method for $n=4$ and observed an exponential rate of degree growth, indicating that the map does not have any integrals additional to those given above (nor any other properties which would make it integrable, such as a symplectic structure with respect to which sufficiently many integrals are in involution).

\medskip
\noindent{\bf 
Acknowledgment.} This paper is dedicated to Hans Munthe-Kaas and Brynjulf Owren on the occasion of their 120th birthday. We are grateful for their friendship and collegiality over many years and in many countries. RQ, RM, and DM have particularly fond memories of their very fruitful and enjoyable visits to Norway on multiple occasions.

\medskip
Received xxxx 20xx; revised xxxx 20xx; early access xxxx 20xx.
\medskip

\end{document}